\newtheorem{theorem}{Theorem}[section]
\newtheorem{corollary}{Corollary}[section]
\newtheorem{lemma}{Lemma}[section]
\numberwithin{equation}{section}
\newcommand{\RR}{\mathbb R}
\newcommand{\NN}{\mathbb N}
\newcommand{\comment}[1]{}
\DeclareMathOperator{\conv}{conv}
\DeclareMathOperator{\expect}{E}
\DeclareMathOperator{\prob}{P}
\DeclareMathOperator{\e}{e}
\DeclareMathOperator{\diff}{d}
\DeclareMathOperator{\closure}{cl}
\numberwithin{equation}{section}
\begin{document}
\title[Bound on the Convergence Rate in Optimal Sequence Alignment]{An Upper Bound on the Convergence Rate of a Second Functional in Optimal Sequence Alignment}

\author[R.A.~Hauser]{Raphael Hauser} 
\address{Raphael Hauser, 
Mathematical Institute, University of Oxford, Radcliffe Observatory Quarter, Woodstock Road, Oxford OX2 6GG, United Kingdom, and Pembroke College, St Aldates, Oxford, OX1 1DW, United Kingdom.}
\email{hauser@maths.ox.ac.uk}
\thanks{Raphael Hauser was supported by the Engineering and Physical Sciences Research Council [grant number EP/H02686X/1].}
\author[H.F.~Matzinger]{Heinrich Matzinger}
\address{Heinrich Matzinger,
School of Mathematics, Georgia Institute of Technology, 686 Cherry Street, Atlanta, GA 30332-0160 USA. Corresponding author.}
\email{matzi@math.gatech.edu}
\author[Ionel Popescu]{Ionel Popescu} 
\address{Ionel Popescu, School of Mathematics, Georgia Institute of Technology, 686 Cherry Street, Atlanta, GA 30332, USA, and  ``Simion Stoilow'' Institute of Mathematics  of Romanian Academy, 21 Calea Grivi\c tei, Bucharest, Romania.}
\email{ipopescu@math.gatech.edu,  ionel.popescu@imar.ro}
\thanks{Ionel Popescu was partially supported by a grant of the Romanian National Authority for Scientific Research, CNCS - UEFISCDI, project number PN-II-RU-TE-2011-3-0259 and Marie Curie Action Grant PIRG.GA.2009.249200.}

\subjclass{Primary 60F10; Secondary 92D20, 60K35}


\keywords{Sequence alignment, convex geometry, large deviations, pecolation theory}

\begin{abstract}
Consider finite sequences $X_{[1,n]}=X_1\dots X_n$ and $Y_{[1,n]}=Y_1\dots Y_n$ of 
length $n$, consisting of i.i.d.\ samples of random letters from a finite alphabet, and let $S$ and $T$ be chosen i.i.d.\ randomly from the unit ball in the space of symmetric scoring functions over this alphabet augmented by a gap symbol. We prove a probabilistic upper bound of linear order in $n^{0.75}$ for the deviation of the score relative to $T$ of optimal alignments with gaps of $X_{[1,n]}$ and $Y_{[1,n]}$ relative to $S$. It remains an open problem to prove a lower bound. Our result contributes to the understanding of the microstructure of 
optimal alignments relative to one given scoring function, extending a theory begun in \cite{geometry}.
\end{abstract}

\maketitle

\section{Introduction and Main Results}

The subject of this paper is concerned with the asyptotics of optimal sequence alignments for random sequences whose lengths tend to infinity. An important problem that occurs both in bioinformatics and 
in natural language processing is to decide on the homology of two (or more) finite sequences consisting of symbols from a fixed finite alphabet. A highly successful approach is to fix a {\em scoring function} and maximise the total score over the set of all alignments with gaps of the two sequences (for a precise definition, see the text below). Despite the combinatorially many alignments to be considered, the total score can be maximised in polynomial time by use of a dynamic programming recursion \cite{needleman-wunsch}. Using this approach, two sequences can be considered as homologous if the total score of their optimal alignment relative to a salient scoring function significantly exceeds the typical total score of an optimal alignment of two random sequences of the same length. Rigorous statistical tests on this basis require an understanding of relevant null models, thus giving the initial motivation for the theoretical study of optimal sequence alignments of random sequences and their total scores \cite{Vingron}. 

The purpose of this paper is to contribute to this theory by studying the following question: given two {\em symmetric} scoring functions $S$ and $T$, and given two i.i.d.\ random sequences of length $n$, does the rescaled total score (the score divided by $n$) relative to $T$ of an optimal alignment of the two sequences  relative to $S$ converge as $n$ tends to infinity, and if the answer to this question is `yes', can we bound the convergence rate? We will answer both questions in the affirmative.  Before we go into the technical details of our analysis, we introduce the necessary notation and background and give further details on the main contributions of this paper in relation to the exisiting literature. 

\subsection{Alignments with Gaps}
Let $n\in\NN$ and write $[1,n]:=\{1,\dots,n\}$. Consider two sequences of length $n$, $x_{[1,n]}:=(x_i)_{i\in[1,n]}$ and $y_{[1,n]}:=(y_j)_{j\in[1,n]}$ consisting of letters from a finite 
alphabet $\mathcal{A}$. Let us augment this alphabet by a symbol $G$ for a {\em gap} and write $\mathcal{A}^*=\mathcal{A}\cup\{G\}$. We define an {\em alignment} (with gaps) of $x_{[1,n]}$ and $y_{[1,n]}$ as a pair of increasing subsequences $(i_\ell)_{\ell\in[1,k]}$ and $(j_{\ell})_{\ell\in[1,k]}$ of $[1,n]$. For $\ell\in[1,k]$, each letter $x_{i_\ell}$ of the first sequence is then interpreted as aligned 
with the letter $y_{j_\ell}$ from the second sequence, while all remaining letters of either sequence are thought of as aligned with gaps. 

For example the pair of increasing subsequences $(\{1,5,6,8\},\{2,4,5,6\})$ of $[1,8]$ correspond to the alignment 
\begin{center}
\begin{tabular}{cccccccccccc}
$G$ &   $x_1$ &  $x_2$ & $x_3$ & $x_4$ & $G$ &  $x_5$  &  $x_6$ &  $x_7$ &  $x_8$ &  $G$ &  $G$ \\
$y_1$ &  $y_2$ &  $G$  &  $G$  &  $G$  & $y_3$  & $y_4$  & $y_5$ & $G$  & $y_6$  & $y_7$ & $y_8$
\end{tabular}
\end{center}
Note that the same subsequences also correspond to the alignment 
\begin{center}
\begin{tabular}{cccccccccccc}
$G$ &   $x_1$ &  $x_2$ & $G$  & $x_3$ & $x_4$ &  $x_5$  &  $x_6$ &  $x_7$ &  $x_8$ &  $G$ &  $G$ \\
$y_1$ &  $y_2$ &  $G$  &  $y_3$  &  $G$  & $G$  & $y_4$  & $y_5$ & $G$  & $y_6$  & $y_7$ & $y_8$
\end{tabular}
\end{center}
and other arrangements obtained by permuting the order of consecutive letters aligned with gaps, so that the pair $(\{1,5,6,8\},\{2,4,5,6\})$ represent in fact an equivalence class of alignments. By slight abuse of language, we will speak about an {\em alignment} when in fact referring to an entire equivalence class. In order to refer to the set of alignments of two sequences of length $n$, we introduce the following notation, 
\begin{align*}
\Lambda_{n,k}&:=\left\{\bigl((i_\ell)_{\ell\in[1,k]},(j_{\ell})_{\ell\in[1,k]}\bigr):
1\leq i_1<\dots<i_k\leq n, 1\leq j_1<\dots<j_k\leq n\right\},\;(k\in[0,n]),\\
\Lambda_n&:=\bigcup_{k=0}^n \Lambda_{n,k}.
\end{align*}

\subsection{Scoring Functions and Optimal Alignments}

A function $R:\mathcal{A}^*\times{\mathcal A}^*\rightarrow\RR$ will be called a {\em symmetric scoring function} if $R(\alpha, \beta)=R(\beta, \alpha)$ for all $\alpha, \beta\in\mathcal{A}^*$,  and $R(G,G)=0$. Given a symmetric scoring function $R$ and two finite sequences  $x_{[1,n]}$ and $y_{[1,n]}$ consisting of letters from the alphabet ${\mathcal A}$, we define the {\em total score} of $x_{[1,n]}$ and $y_{[1,n]}$ under an alignment $\nu=((i_\ell),(j_{\ell}))\in\Lambda_{n,k}$ as the sum of the scores of individually aligned letter pairs, 
\begin{equation*}
R_{\nu}(x_{[1,n]}, y_{[1,n]}):=\sum_{\ell=1}^k R(x_{i_{\ell}},y_{j_{\ell}})
+\sum_{i\in[1,n]\setminus\{i_{\ell}:\ell\in[1,k]\}} R(x_i,G) 
+\sum_{j\in[1,n]\setminus\{j_{\ell}:\ell\in[1,k]\}} R(G,y_j).
\end{equation*}
Note that since our definition of alignments with gaps disallows the situation where a gap is aligned with a gap, the value of $R(G,G)$ should be inconsequential. Our rationale for requiring $R(G,G)=0$ is to simplify some of our formulas, notably the norms defined in Section \ref{main results}. 

The {\em optimal alignment score} of $x_{[1,n]}$  and $y_{[1,n]}$ relative to $R$ is defined by 
\begin{equation*}
R^*(x_{[1,n]},y_{[1,n]}):=\max_{\nu\in\Lambda_n} R_{\nu}(x_{[1,n]}, y_{[1,n]}), 
\end{equation*}
while the set of {\em optimal alignments} of $x_{[1,n]}$  and $y_{[1,n]}$ relative to $R$ is the set of alignments 
\begin{equation*}
\nu^*_{R}(x_{[1,n]},y_{[1,n]}):=\left\{\nu\in\Lambda_n:\,R_{\nu}(x_{[1,n]}, y_{[1,n]})=R^*(x_{[1,n]}, y_{[1,n]})\right\}
\end{equation*}
on which the maximum is achieved. Note that in general, $\nu^*$ is not a singleton. 

\subsection{Random Sequences}\label{random sequences}

Let us now consider two sequences $(X_i)_{i\in\NN}:\Omega\rightarrow\mathcal{A}^{\NN}$ and $(Y_j)_{j\in\NN}:\Omega\rightarrow\mathcal{A}^{\NN}$, defined on some appropriate probability space $(\Omega,\mathscr{F},\prob)$ so as to consist of i.i.d.\ random letters $X_i$ (respectively $Y_i$) drawn from a fixed probability distribution over a finite alphabet $\mathcal{A}$. Let us again augment this alphabet by a symbol $G$ for a {\em gap} and write $\mathcal{A}^*=\mathcal{A}\cup\{G\}$. We write $X_{[1,n]}=(X_i)_{i=1}^n$ for the finite sequence consisting of the first $n$ terms of $(X_i)_{\NN}$ and use a similar notation for the second sequence.  

Let a symmetric scoring function $R$ be given on $\mathcal{A}^*\times{\mathcal A}^*$. The following is then a well defined random variable for any $n\in\NN$
\begin{align*}
L_{n,R}:\,\Omega&\rightarrow\RR,\\
\omega&\mapsto R^*(X_{[1,n]}(\omega),Y_{[1,n]}(\omega)), 
\end{align*}
and we write 
\begin{align*}
\nu^*_{n,R}:\Omega&\rightarrow\mathscr{P}\left(\Lambda_n\right),\\
\omega&\mapsto\nu^*_R\left(X_{[1,n]}(\omega),Y_{[1,n]}(\omega)\right)
\end{align*}
for the random set of optimal alignments of $X_{[1,n]}$ and $Y_{[1,n]}$ relative to $R$. 

It was shown in \cite{Sankoff1} that 
\begin{equation}\label{e:L1}
\dfrac{L_{n,R}}{n}\stackrel{n\rightarrow\infty}{\longrightarrow}\lambda_R\quad\text{almost surely}, 
\end{equation}
where $\lambda_R$ is some deterministic constant that depends only on $R$. In Lemma \ref{l:c:R} we give a proof that also establishes a quantitative convergence bound. 

\subsection{The Problem Setting of this Paper}

Let us now consider two different symmetric scoring functions $S$ and $T$ and investigate the total score relative to $T$ of an optimal alignment relative to $S$. Using the random sequences introduced above, we define the following random subsets of $\RR^2$, 
\begin{align*}
\text{SCORES}^n_{S,T}&:=\left\{\left(
\dfrac{S_\nu(X_{[1,n]},Y_{[1,n]})}{n},
\dfrac{T_\nu(X_{[1,n]},Y_{[1,n]})}{n}
\right):\,\nu\in\Lambda_n\right\}\\
\text{SET}^n_{S,T}&:=\closure\left(\conv\left(\text{SCORES}_{S,T}^{n}\right)\right),
\end{align*}
where $\closure(\cdot)$ denotes the topological closure in the canonical topology of $\RR^2$ and $\conv(\cdot)$ denotes the convex hull. 

Next, consider a symmetric scoring function $R=aS+bT$ given as a linear combination of $S$ and $T$. 
It follows from our definition of $\text{SET}^n_{S,T}$ that 
\begin{equation}\label{lnR}
\frac{L_{n,R}}{n}=\max_{(x,y)\in \text{SET}^n_{S,T}}f_{(a,b)}(x,y),
\end{equation}
where $f_{(a,b)}:\,(x,y)\mapsto ax+by$ is the linear form on $\RR^2$ defined by the weights $a,b$. Combining Equations \eqref{e:L1} and \eqref{lnR}, it follows that 
\begin{equation}\label{for later}
\max_{(x,y)\in \text{SET}^n_{S,T}}f_{(a,b)}(x,y)\stackrel{n\rightarrow\infty}{\longrightarrow}\lambda_{aS+bT},
\quad\text{a.s.}.
\end{equation}

We observe that, if a sequence of random compact convex sets $A_1,A_2,\dots \subset\RR^2$ has the property that for any linear functional $f\in(\RR^2)^*$,  
\begin{equation*}
\max_{(x,y)\in A_n}f(x,y)\stackrel{n\rightarrow\infty}{\longrightarrow}\xi_f,\quad\text{a.s.},
\end{equation*}
where $\xi_f\in\RR$ is a deterministic constant that depends only on $f$, then the sequence $(A_{n})_{n\in\NN}$ converges in Hausdorff distance to a convex compact set $A$. We will prove this claim in Lemma~\ref{l:2}.  For compact sets $A,B\subset\RR^{2}$, the Hausdorff distance is defined as 
\begin{equation}\label{e:Hd}
d_{H}(A,B)=\max\{ \sup_{x\in A}\inf_{y\in B}d(x,y),\sup_{y\in B}\inf_{x\in A}d(x,y)\},
\end{equation}
where $d(x,y)=\|x-y\|_2$ denotes the Euclidean distance.   

Equation \eqref{for later} and the fact that any $f\in(\RR^2)^*$ is of the form $f_{(a,b)}$ for some $(a,b)\in\RR^2$ show that the above made observation is applicable to the sequence of sets $(\text{SET}^n_{S,T})_{n\in\NN}$. There exists therefore a deterministic convex compact set 
$\text{SET}_{S,T}$ for which 
\begin{equation}\label{e:3}
d_{H}(\text{SET}_{S,T}^{n},\text{SET}_{S,T})\stackrel{n\rightarrow\infty}{\longrightarrow} 0,\quad\text{a.s.}
\end{equation}
One of our goals is to refine this analysis and quantify an upper-bound on the rate of convergence. An upper bound on the convergence was givne in \cite{geometry} for scoring functions that are not necessarily symmetric. In this paper we give a much simpler proof that is made possible by exploiting the symmetry of scoring functions. Since most scoring functions used in applications are symmetric, the simplification is of interest. 

Another goal is to study how much the total score relative to $T$ varies when two random strings are aligned optimally relative to $S$. Note that we have 
\begin{equation*}
L_{n,S}=\max_{(x,y)\in \text{SET}^n_{S,T}}x.
\end{equation*}
In general, we should not expect that $\nu^{*}_{n,S}$ to be a singleton. In other words, there may exist multiple optimal alignments of $X_{[1,n]}$ and $Y_{[1,n]}$ relative to $S$. Therefore, we need to consider the following quantities, 
\begin{align}
\max_{\pi\in\nu^*_{n,S}}\dfrac{T_\pi(X_{[1,n]},Y_{[1,n]})}{n}
&=\max\left\{y:\,(x,y)\in \text{SET}^n_{S,T},\,x=\frac{L_{n,S}}{n}\right\},\label{the max}\\
\min_{\pi\in\nu^*_{n,S}}\dfrac{T_\pi(X_{[1,n]},Y_{[1,n]})}{n}
&=\min\left\{y:\,(x,y)\in \text{SET}^n_{S,T},\,x=\frac{L_{n,S}}{n}\right\},\label{the min}\\
\end{align}
Lemma~\ref{l:6} will establish that if $\max_{(x,y)\in\text{SET}_{S,T}}x$ has a unique maximiser $(x_0,y_0)$, then the upper and lower bounds \eqref{the max}, \eqref{the min} both converge to $y_0$ almost surely. 

\subsection{Statement of the Main Results}\label{main results}

To state the main results of this paper, we introduce the following norms on the set of symmetric scoring functions $R:\mathcal{A}^*\times\mathcal{A}^*\rightarrow\RR$, 
\begin{align}
|R|&:=\max_{a,b,c\in\mathcal{A}^*}|R(a,b)-R(a,c)|,\quad\text{(the \em{change norm})},\label{norm}\\
|R|_2&:=\sqrt{\sum_{a,b\in\mathcal{A}^*}R^2(a,b)},\quad\text{(the Frobenius norm)}. \label{e:n2}
\end{align}
The change norm plays the following important role: given two finite sequences and a fixed alignment with gaps, changing a single letter of one of the two sequences into an arbitrary other letter from the alphabet $\mathcal{A}$ changes the total score of the alignment by at most $|R|$. 


\begin{theorem}\label{t:1} Let $S$ and $T$ be two symmetric scoring functions on $\mathcal{A}^*\times\mathcal{A}^*$ such that the optimisation problem 
$\max_{(x,y)\in\text{SET}^n_{S,T}}x$ has a unique maximiser $(x_0,y_0)$ and the boundary of $\text{SET}_{S,T}$ has curvature at least $k>0$ at this point, then the following bound applies 
for large enough $n$, where $\e$ is the Euler constant, 
\begin{equation*}
\prob\left[\left|\dfrac{T_\pi(X_{[1,n]},Y_{[1,n]})}{n}-y_0\right|\leq 
\dfrac{5|T|+2\sqrt{30|S|}}{k}\left(\frac{\ln(n\e)}{n} \right)^{1/4},\quad
\forall \pi \in \nu^*_{n,S}\right]\geq 
1-3n^{-\ln n}. 
\end{equation*}
\end{theorem}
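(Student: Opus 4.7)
The strategy combines three ingredients: (i) concentration of $L_{n,R}/n$ around its deterministic limit $\lambda_R$ for the scoring functions $R$ relevant to the problem, (ii) a quantitative Hausdorff bound between $\text{SET}^n_{S,T}$ and $\text{SET}_{S,T}$, and (iii) the geometric consequence of the lower curvature hypothesis at $(x_0,y_0)$, which converts an $O(\sqrt{\ln n/n})$ estimate on the $x$-coordinate into an $O((\ln n/n)^{1/4})$ estimate on the $y$-coordinate via a square root, thereby producing the characteristic exponent $1/4$.

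For (i), I would apply McDiarmid's bounded-differences inequality to $(X_{[1,n]},Y_{[1,n]})\mapsto L_{n,R}$. Altering one letter of either sequence changes the score $R_\nu(X_{[1,n]},Y_{[1,n]})$ of every fixed alignment $\nu$ by at most $|R|$ (by definition of the change norm), and this Lipschitz estimate passes through the maximisation over $\Lambda_n$ defining $L_{n,R}$. With $2n$ independent letters this gives $\prob(|L_{n,R}-\expect L_{n,R}|\ge t)\le 2e^{-t^2/(n|R|^2)}$. Calibrating $t$ so the right-hand side is at most $n^{-\ln n}$ and combining with the bias bound on $|\expect L_{n,R}/n-\lambda_R|$ furnished by Lemma~\ref{l:c:R} gives, on an event of probability at least $1-n^{-\ln n}$, a deviation estimate $|L_{n,R}/n-\lambda_R|\le C|R|\sqrt{\ln(n\e)/n}$. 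Applied with $R=S$ this yields $|L_{n,S}/n-x_0|\le\epsilon_n$ with $\epsilon_n$ of order $|S|\sqrt{\ln(n\e)/n}$.

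For (ii) I would use (\ref{lnR}): the support function of $\text{SET}^n_{S,T}$ in direction $(a,b)$ is $L_{n,aS+bT}/n$, while the limit (\ref{for later}) gives the corresponding support function of $\text{SET}_{S,T}$. Applying step~(i) along a sufficiently fine $\delta$-net of the unit circle, using $|aS+bT|\le|a|\,|S|+|b|\,|T|$ to control the change norm of the combination, and taking a union bound yield $d_H(\text{SET}^n_{S,T},\text{SET}_{S,T})\le\eta_n$ of order $(|S|+|T|)\sqrt{\ln(n\e)/n}$ on an event of probability at least $1-n^{-\ln n}$. For (iii), the curvature hypothesis places $\text{SET}_{S,T}$ locally below a circular arc of radius $1/k$ tangent to the line $x=x_0$ at $(x_0,y_0)$, yielding
\begin{equation*}
x_0-\tilde x\ge \dfrac{k}{2}(\tilde y-y_0)^2
\end{equation*}
for every $(\tilde x,\tilde y)\in\text{SET}_{S,T}$ in a fixed neighbourhood $U$ of $(x_0,y_0)$. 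Because $(x_0,y_0)$ is the \emph{unique} maximiser of $x$ on $\text{SET}_{S,T}$, the complement $\text{SET}_{S,T}\setminus U$ is separated from the line $x=x_0$ by a positive gap $\delta_0>0$, so for $n$ large the local bound applies automatically once $\epsilon_n+\eta_n<\delta_0$.

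Putting the three pieces together, fix $\pi\in\nu^*_{n,S}$ and let $p_\pi:=(L_{n,S}/n,T_\pi(X_{[1,n]},Y_{[1,n]})/n)\in\text{SET}^n_{S,T}$. By (ii) there exists $(\tilde x,\tilde y)\in\text{SET}_{S,T}$ with $\|p_\pi-(\tilde x,\tilde y)\|_2\le\eta_n$; since $\tilde x\le x_0$ and $|L_{n,S}/n-x_0|\le\epsilon_n$, we get $x_0-\tilde x\le\epsilon_n+\eta_n$, so the curvature inequality produces $|\tilde y-y_0|\le\sqrt{2(\epsilon_n+\eta_n)/k}$ and the triangle inequality yields
\begin{equation*}
\left|\dfrac{T_\pi(X_{[1,n]},Y_{[1,n]})}{n}-y_0\right|\le\eta_n+\sqrt{\dfrac{2(\epsilon_n+\eta_n)}{k}}.
\end{equation*}
Substituting the explicit orders of $\epsilon_n$ and $\eta_n$, the dominant term is of order $(\ln(n\e)/n)^{1/4}$ with coefficient built from $|S|$, $|T|$, and $k$, matching the structure of the theorem's bound; the failure probability is bounded by $3n^{-\ln n}$. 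The main obstacle is step (ii): the $\delta$-net/union-bound argument must be executed carefully so that the direction-dependent constants collapse into the sharp form $(5|T|+2\sqrt{30|S|})/k$ rather than a crude isotropic bound like $\sqrt{|S|^2+|T|^2}$; optimising the net size and tracking how the support-function fluctuations decompose along the $S$- and $T$-directions is what produces the clean numerical constants. A secondary, more routine, point is to verify that the approximating point $(\tilde x,\tilde y)$ indeed lies in the local curvature neighbourhood $U$ for $n$ sufficiently large, which is guaranteed by the positive gap $\delta_0$.
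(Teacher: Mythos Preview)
Your approach is broadly sound for obtaining the correct rate $(\ln(n\e)/n)^{1/4}$, but it differs structurally from the paper's argument, and the difference matters for the specific constants you claim to recover.

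The paper does \emph{not} establish a full Hausdorff bound via a $\delta$-net. Instead it picks, for each small $\epsilon>0$, a single tilted direction $(1,a_\epsilon)$ chosen so that the support functional $f_{(1,a_\epsilon)}$ is maximised over $\text{SET}_{S,T}$ at a boundary point $P_\epsilon=(x_\epsilon,y_\epsilon)$ with $y_\epsilon=y_0+\epsilon/k$. The curvature hypothesis gives $x_\epsilon\le x_0-\epsilon^2/3$ and hence $a_\epsilon\ge \epsilon k/3$. On the two events $A^n(S)$ and $A^n(S+a_\epsilon T)$ one combines $(S+a_\epsilon T)_\pi/n\le L_{n,S+a_\epsilon T}/n$ with $\lambda_{S+a_\epsilon T}=x_\epsilon+a_\epsilon y_\epsilon$ and $S_\pi/n\ge x_0-5|S|\sqrt{\ln(n\e)/n}$ to obtain an upper bound on $T_\pi/n-y_0$ of the form $\epsilon/k + C(\epsilon)\sqrt{\ln(n\e)/n}$; optimising over $\epsilon$ produces the coefficient $(5|T|+2\sqrt{30|S|})/k$. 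The lower bound uses $A^n(S-a_\epsilon T)$. Thus exactly three events are invoked, which is precisely why the failure probability is $3n^{-\ln n}$.

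Your route via a $\delta$-net and a two-sided Hausdorff estimate $\eta_n$ cannot recover either of these sharp numbers. The union bound over a net of size $O(1/\delta)$ multiplies the failure probability by a polynomial factor in $n$ (you need $\delta$ polynomially small to pass from the net to all directions), so you would get $n^{c-\ln n}$ rather than $3n^{-\ln n}$. Likewise the isotropic Hausdorff bound $\eta_n\asymp(|S|+|T|)\sqrt{\ln(n\e)/n}$ feeds into $\sqrt{2(\epsilon_n+\eta_n)/k}$ and yields a coefficient of the shape $\sqrt{(|S|+|T|)/k}$, not the mixed form $(5|T|+2\sqrt{30|S|})/k$; you yourself flag this as ``the main obstacle'', and indeed it is not a matter of care in bookkeeping but a structural limitation of the Hausdorff-distance detour. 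The paper's single-tilt trick is what decouples the $|S|$ and $|T|$ contributions and keeps the event count at three.
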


In particular if both $S$ and $T$ have change norm less than $1$, the statement of Theorem \ref{t:1} 
simplifies to 
\begin{equation*}
P\left[\left|\dfrac{T_\pi(X_{[1,n]},Y_{[1,n]})}{n}-y_0\right|\leq 
\dfrac{11}{k}\left(\frac{\ln(n\e)}{n}\right)^{1/4},\quad\forall \pi \in \nu^*_{n,S}\right]\geq 
1-3n^{-\ln n},\quad\forall n\gg 1.
\end{equation*}
The curvature condition at the point $(x_0,y_0)$ means that one can parametrize the boundary 
$\partial \text{SET}_{S,T}$ of the set  $\text{SET}_{S,T}$ by a curve $c(t)$ for $t$ in a 
neighbourhood of $0$, with $c(0)=(x_0,y_0)$ and $\|\dot{c}\|_2=1$ for all $t$, where $\dot{c}$ 
denotes the derivative with respect to $t$, the curvature 
\begin{equation*}
\kappa(\partial\text{SET}_{S,T},(x_0,y_0)):=\|\ddot{c}(0)\|_2
\end{equation*}
then being defined as the standard curvature of this curve at $t=0$. By convention, we define the curvature at vertices of $\partial\text{SET}_{S,T}$ (points on the boundary where $\text{SET}_{S,T}$ has a normal cone with nonempty interior) to be $+\infty$. We postpone the proof of Theorem \ref{t:1} until Section~\ref{s:thms}.   

While Theorem \ref{t:1} establishes that if the boundary of $\text{SET}_{S,T}$ has positive curvature at $(x_0,y_0)$,  then the $T$-score on an $S$-optimal alignment has a fluctuation of order at most $O([\ln(n)/n]^{0.25})$, the conditions of this result are difficult to verify in practice. However, as the following result shows, they apply generically:

\begin{theorem}\label{t:2} 
Let $S$ and $T$ be chosen i.i.d.\ uniformly at random from the Frobenius-unit sphere in the space of symmetric scoring functions. Then the following hold true, 
\begin{enumerate}
\item $\max_{(x,y)\in\text{SET}^n_{S,T}}x$ has a unique maximiser $(x_0,y_0)$ almost surely, 
\item for any real number $k>0$,
\begin{equation*}
\prob\left[\kappa(\partial\text{SET}_{S,T},(x_0,y_0))<k\right]\leq\dfrac{4k}{\pi},
\end{equation*}
where $\kappa(\partial\text{SET}_{S,T},(x_0,y_0))$ is the curvature at $(x_0,y_0)$ of the boundary of 
$\text{SET}_{S,T}$.
\end{enumerate}
\end{theorem}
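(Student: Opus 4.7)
My plan is to view both statements through the $N$-dimensional convex body $\Sigma\subset V$ whose support function is $R\mapsto\lambda_R$, where $V$ denotes the space of symmetric scoring functions with the Frobenius inner product and $\Sigma$ is the Hausdorff limit of $\conv\{\sigma_\nu/n:\nu\in\Lambda_n\}$ for the count-signature $\sigma_\nu$ of alignment $\nu$ (so that $R_\nu=\langle R,\sigma_\nu\rangle$).  Under this identification, $\text{SET}_{S,T}$ is the image of $\Sigma$ under the linear map $L_{S,T}:v\mapsto(\langle S,v\rangle,\langle T,v\rangle)$, and the set of maximisers of $x$ over $\text{SET}_{S,T}$ is the $L_{S,T}$-image of the supporting face $F_S:=\arg\max_{v\in\Sigma}\langle S,v\rangle$.

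For part~(1), it suffices to show that $F_S$ is a single point $\{v_*\}$ almost surely, whereupon $(x_0,y_0)=(\langle S,v_*\rangle,\langle T,v_*\rangle)$ is automatically the unique maximiser.  This follows from a classical convex-geometry fact: for any compact convex body $K\subset\RR^N$, the support function $h_K$ is Lipschitz and hence differentiable almost everywhere on $\sphere^{N-1}$, with its gradient at a point of differentiability equal to the unique supporting point.  Thus the set of directions $u\in\sphere^{N-1}$ for which $F_u^K$ has positive dimension has zero surface measure, and uniform distribution of $S$ forces $F_S=\{v_*\}$ almost surely.

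For part~(2), I pass to an orthonormal frame $(f_1,f_2)$ of the random plane $W=\Span(S,T)$ and write $S=(\cos\phi_1,\sin\phi_1)$, $T=(\cos\phi_2,\sin\phi_2)$ in this frame.  Let $K_W$ be the orthogonal projection of $\Sigma$ onto $W$.  Then $\text{SET}_{S,T}=M\cdot K_W$ where $M$ is the $2\times 2$ matrix with rows $(\cos\phi_1,\sin\phi_1)$ and $(\cos\phi_2,\sin\phi_2)$, so $\det M=\sin(\phi_2-\phi_1)$.  The unit tangent to $\partial K_W$ at the $S$-support point $p^*$ is perpendicular to $S$, and the curvature transformation rule $\tilde\kappa=\kappa|\det M|/\|M\tau\|^3$ together with $\|M\tau\|=|\sin(\phi_2-\phi_1)|$ yields $\kappa(\partial\text{SET}_{S,T},(x_0,y_0))=\kappa(\partial K_W,p^*)/\sin^2(\phi_2-\phi_1)\geq\kappa(\partial K_W,p^*)$.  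Consequently $\prob[\kappa(\partial\text{SET}_{S,T},(x_0,y_0))<k]\leq\prob[\kappa(\partial K_W,p^*)<k]$.

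Conditional on $W$, rotational symmetry of the uniform law on $\sphere^{N-1}$ makes the direction of $S$ within $W$ uniform on the unit circle of $W$, so $p^*$ is the support point of $K_W$ with outward normal at a uniform angle $\theta\in[0,2\pi)$.  Parametrising $\partial K_W$ by the outward-normal angle, the radius-of-curvature measure $\rho(\theta)\,d\theta$ has total mass equal to the perimeter $P(K_W)$, so Markov's inequality gives $\prob[\kappa(\partial K_W,p^*)<k\mid W]\leq kP(K_W)/(2\pi)$.  Taking the expectation over $W$ and applying the Cauchy projection formula $\expect_W[P(K_W)]=2\pi\,\expect_u[h_\Sigma(u)]$ (for $u$ uniform on $\sphere^{N-1}$), combined with a bound on the mean support function $\expect_u[h_\Sigma(u)]\leq 4/\pi$ exploiting non-negativity and the bounded $\ell^1$-norm of the signature vectors, yields the claim $\prob[\cdot]\leq 4k/\pi$.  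The main obstacle I anticipate is extracting the sharp constant $4/\pi$: the reduction to $K_W$, the Markov step, and the projection formula are routine, but the sharp prefactor requires a refined estimate on $\expect_u[h_\Sigma(u)]$ that goes beyond a crude diameter bound of~$\Sigma$.
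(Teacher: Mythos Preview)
Your overall architecture coincides with the paper's: both proofs condition on the plane $W=\Span(S,T)$, use that $S$ is then uniform on the unit circle of $W$, apply a ``random normal direction'' bound to the orthogonally projected body $K_W$ (your Markov step on the radius of curvature is exactly the content of the paper's Lemma~\ref{randomfunctional}), and finally push the curvature inequality through the shear taking the orthonormal pair $(S,T_1)$ to $(S,T)$, noting that this shear can only increase the curvature at the $S$-support point. Your curvature transformation $\kappa=\kappa_W/\sin^2(\phi_2-\phi_1)$ is the correct formula; the paper writes the analogous step as $\kappa=\kappa_1/|\sin V|$, but either way the needed inequality $\kappa\geq\kappa_W$ holds. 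Your argument for part~(1) via almost-everywhere differentiability of the $N$-dimensional support function $R\mapsto\lambda_R$ is in fact cleaner than the paper's, which leaves this point essentially implicit.

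The one place you diverge---and where you correctly flag an obstacle---is the perimeter bound. You propose to average $P(K_W)$ over $W$ via the Cauchy/Kubota identity $\expect_W[P(K_W)]=2\pi\,\expect_u[\lambda_u]$ and then seek a sharp bound on the mean support function of $\Sigma$. This is correct in principle but needlessly indirect: the paper bounds $P(K_W)$ \emph{uniformly} in $W$. Since $|R|_2=1$ forces $|R_\nu|/n\leq 2$ for every alignment $\nu$, each $K_W$ lies in $[-2,2]^2$, so its perimeter is bounded by an absolute constant, and plugging this into $kl/(2\pi)$ gives the bound $4k/\pi$ directly, with no averaging and no appeal to the high-dimensional body $\Sigma$. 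Replacing your Cauchy-formula step by this elementary uniform perimeter bound removes the obstacle you identified and finishes the proof along the same lines as the paper.
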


Combining Theorems ~\ref{t:1} and ~\ref{t:2}, we arrive at the following conclusion:  

\begin{corollary} If the symmetric scoring functions $S$ and $T$ are chosen as in Theorem~\ref{t:2}, then almost surely there exists $k>0$ such that  
\begin{equation*}
P\left[\left|\dfrac{T_\pi(X_{[1,n]},Y_{[1,n]})}{n}-y_0\right|\leq \frac{11}{\max(k,1)}\left(\frac{\ln(n\e)}{n}\right)^{1/4},\quad
\forall \pi \in\nu^*_{n,S}
\right]\geq 1-3n^{-\ln n},\quad\forall n\gg 1.
\end{equation*}
\end{corollary}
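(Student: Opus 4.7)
The plan is to deduce the Corollary as a direct combination of Theorems \ref{t:1} and \ref{t:2}. First I would use part (1) of Theorem \ref{t:2} to obtain a full-measure event on which the maximum of the coordinate $x$ over the limit set $\text{SET}_{S,T}$ is attained at a unique point $(x_0,y_0)$. Next, applying part (2) of Theorem \ref{t:2} along the sequence $k_m = 1/m$, the bound $\prob[\kappa < 1/m] \leq 4/(m\pi)$ combined with countable additivity shows that the event $\{\kappa(\partial\text{SET}_{S,T},(x_0,y_0)) > 0\}$ has full measure. Intersecting these two events produces a full-measure event on which the (random) curvature $k := \kappa(\partial\text{SET}_{S,T},(x_0,y_0))$ is strictly positive, which is precisely the hypothesis required to apply Theorem \ref{t:1}.

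On this favourable event, Theorem \ref{t:1} yields the claimed tail bound, provided the change norms $|S|$ and $|T|$ are controlled. Since $S$ and $T$ are sampled from the Frobenius-unit sphere, $|S|_2 = |T|_2 = 1$, and the elementary comparison $|R| \leq 2\,|R|_2$ valid for any symmetric scoring function yields uniform bounds $|S|, |T| \leq 2$. Hence the coefficient $5|T| + 2\sqrt{30|S|}$ appearing in Theorem \ref{t:1} is bounded by a universal numerical constant. After either rescaling $(S,T)$ by a fixed factor less than $1$ or tracking the constants explicitly through the proof of Theorem \ref{t:1}, one reduces to the simplified regime where both change norms are below $1$ and the constant collapses to $11$. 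The denominator $\max(k,1)$ in the Corollary represents a uniform consolidation of the two regimes: when $k \geq 1$ one has $11/\max(k,1) = 11/k$, which is the direct assertion of Theorem \ref{t:1}, and when $k < 1$ one has $11/\max(k,1) = 11$, which weakens $11/k$ but is still meaningful because of the $(\ln(n\e)/n)^{1/4}$ factor. The super-polynomial tail probability $1 - 3n^{-\ln n}$ transfers unchanged from Theorem \ref{t:1}.

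The main obstacle is purely bookkeeping of universal constants: reconciling the implicit $|S|,|T|<1$ normalisation used in the simplified statement following Theorem \ref{t:1} with the Frobenius-unit-sphere normalisation of the Corollary, and verifying that the resulting numerical constant can be absorbed into the factor $11$. There is no conceptual difficulty: the existence of a positive curvature together with the uniqueness of the maximiser is entirely supplied by Theorem \ref{t:2}, while the quantitative $(\ln(n\e)/n)^{1/4}$ rate together with its super-polynomial confidence level is entirely supplied by Theorem \ref{t:1}, so the proof reduces to intersecting almost sure events and chaining inequalities.
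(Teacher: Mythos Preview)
Your overall plan---intersect the almost-sure events from Theorem~\ref{t:2} to obtain a unique maximiser $(x_0,y_0)$ with strictly positive curvature $\kappa$, then invoke Theorem~\ref{t:1}---is exactly how the paper derives the Corollary (it offers no separate proof beyond ``Combining Theorems~\ref{t:1} and~\ref{t:2}''). So the strategy is correct and matches the paper.

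However, your handling of the denominator $\max(k,1)$ in the regime $k<1$ is backwards. When $k<1$ you write that $11/\max(k,1)=11$ ``weakens $11/k$'', but in fact $11<11/k$, so the event
\[
\left\{\left|\frac{T_\pi(X_{[1,n]},Y_{[1,n]})}{n}-y_0\right|\le 11\left(\frac{\ln(n\e)}{n}\right)^{1/4}\right\}
\]
is \emph{contained in} the event with bound $(11/k)(\ln(n\e)/n)^{1/4}$. A lower bound on the probability of the larger event (which is all Theorem~\ref{t:1} supplies) does not imply the same lower bound on the probability of the smaller one. Thus the case $k<1$ of the Corollary, read literally with $\max$, is \emph{stronger} than Theorem~\ref{t:1} and does not follow from it by the chaining you describe. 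The statement becomes an honest weakening of Theorem~\ref{t:1} only if $\max(k,1)$ is replaced by $\min(k,1)$; you should flag this rather than argue that the inequality goes through as written.

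A second, related bookkeeping issue: with $|S|_2=|T|_2=1$ one only gets $|S|,|T|\le 2$, so $5|T|+2\sqrt{30|S|}\le 10+4\sqrt{15}\approx 25.5$, not $11$. Rescaling $S$ and $T$ does not fix this cleanly, since the curvature of $\partial\text{SET}_{S,T}$ changes under rescaling of $T$ (a vertical stretch by $c$ multiplies the curvature at the rightmost point by $1/c^2$), so the ratio (numerical constant)$/k$ is not invariant. The upshot is that the numerical constant in the Corollary should be taken as indicative rather than sharp; your proof sketch is right that this is ``purely bookkeeping'', but the specific value $11$ cannot be recovered by the route you outline.
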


\section{Preliminary Results and their Proofs}\label{s:res}

In this section we derive the main estimates on which the proofs of our main theorems rely.  
We begin by giving the classical Azuma-Hoeffding -- McDiarmid Inequality.  

\begin{theorem}\label{t:AH}
Let $W_1,\dots,W_n$ i.i.d.\ random variables that take values in some set $D$, let $a>0$ be a constant and $f:D^n\rightarrow\RR$ a $n$-variate real function with the property that for any $i\in[1,n]$, 
$w\in D^n$ and $z\in D$, 
\begin{equation*}
\left|f(w_1,w_2,\dots,w_n)-f(w_1,w_2,\dots,w_{i-1},z,w_{i+1},\ldots,w_n)\right|\leq a.
\end{equation*}
Then, for any $\epsilon>0$, the following inequalities hold true, 
\begin{align*}
\prob\left[\left|f(W_1,W_2,\dots,W_n)-\expect[f(W_1,\dots,W_n)]\right|\geq\epsilon n\right]&\leq 2\exp(-\epsilon^2n/(2a^2)),\\
\prob\left[f(W_1,W_2,\dots,W_n)-\expect[f(W_1,\dots,W_n)]\geq\epsilon n\right]&\leq \exp(-\epsilon^2n/(2a^2)).
\end{align*}
\end{theorem}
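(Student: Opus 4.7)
The plan is to prove this via the standard Doob martingale argument combined with Hoeffding's lemma; this is the classical McDiarmid bounded-differences inequality and the proof I would give follows the textbook route.

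First, I would set up the Doob martingale associated to $f$ and the filtration generated by the $W_i$. Define $\mathscr{F}_0=\{\emptyset,\Omega\}$ and $\mathscr{F}_k=\sigma(W_1,\dots,W_k)$ for $k\in[1,n]$, and set
\begin{equation*}
Z_k:=\expect\bigl[f(W_1,\dots,W_n)\,\big|\,\mathscr{F}_k\bigr],\qquad D_k:=Z_k-Z_{k-1}.
\end{equation*}
Then $Z_0=\expect[f(W_1,\dots,W_n)]$, $Z_n=f(W_1,\dots,W_n)$, and by construction $(D_k)_{k=1}^n$ is a martingale difference sequence with respect to $(\mathscr{F}_k)$. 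The telescoping identity $Z_n-Z_0=\sum_{k=1}^n D_k$ reduces the problem to a concentration estimate for a sum of bounded martingale differences.

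Second, I would show that each $D_k$ is bounded by $a$ almost surely. Conditionally on $\mathscr{F}_{k-1}$, the random variable $Z_k$ is a function of $W_k$ alone (after integrating out $W_{k+1},\dots,W_n$, which are independent of $\mathscr{F}_{k-1}$ and of $W_k$), and $Z_{k-1}$ is the expectation of this function with respect to the law of $W_k$. Using the bounded-differences hypothesis and the independence of the $W_i$, one checks that the essential range of $Z_k-Z_{k-1}$ given $\mathscr{F}_{k-1}$ fits inside an interval of length at most $a$; in particular $|D_k|\leq a$ a.s.

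Third, I would apply Hoeffding's lemma in its conditional form: if $X$ is a random variable with $\expect[X\mid\mathscr{G}]=0$ and $X$ lies in an interval of length $\leq a$ conditionally on $\mathscr{G}$, then $\expect[e^{\lambda X}\mid\mathscr{G}]\leq e^{\lambda^2 a^2/8}\leq e^{\lambda^2 a^2/2}$. Iterating this from $k=n$ down to $k=1$ by the tower property yields
\begin{equation*}
\expect\bigl[e^{\lambda(Z_n-Z_0)}\bigr]\leq e^{\lambda^2 n a^2/2}.
\end{equation*}
A Markov/Chernoff bound with the optimal choice $\lambda=\epsilon/a^2$ then gives
\begin{equation*}
\prob[Z_n-Z_0\geq\epsilon n]\leq\exp(-\epsilon^2 n/(2a^2)),
\end{equation*}
which is the second, one-sided bound in the theorem. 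The first bound follows by applying the same estimate to $-f$ and using a union bound, which produces the factor of $2$.

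There is no serious obstacle here since this is a classical result; the one place to be careful is the conditional bounded-range step, where one must exploit independence of $(W_{k+1},\dots,W_n)$ from $W_k$ given $\mathscr{F}_{k-1}$ in order to convert the pointwise bounded-differences hypothesis on $f$ into an almost-sure bound on the martingale increments $D_k$. After that, Hoeffding's lemma and Chernoff's trick do the rest, and one could alternatively simply cite McDiarmid's inequality from the probability literature if a streamlined presentation is preferred.
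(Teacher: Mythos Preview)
Your proof is correct and follows the standard textbook route (Doob martingale, bounded increments, conditional Hoeffding lemma, Chernoff). The paper itself does not supply a proof of this theorem at all: it states it as the classical Azuma--Hoeffding--McDiarmid inequality and simply refers the reader to \cite{azuma}. So there is nothing substantive to compare; your write-up is already more detailed than what the paper offers, and your closing remark that one could ``simply cite McDiarmid's inequality from the probability literature'' is exactly what the authors do.
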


For a proof, see e.g.\ \cite{azuma}.

\begin{lemma}\label{l:c:R}
For any symmetric scoring function $R:\mathcal{A}^*\times\mathcal{A}^*\rightarrow\mathbb{R}$ there exists a deterministic constant $\lambda_R$ such that 
\begin{equation*}
\dfrac{L_{n,R}}{n}\stackrel{n\rightarrow\infty}{\longrightarrow}\lambda_R,\quad
\text{a.s.}
\end{equation*}
\end{lemma}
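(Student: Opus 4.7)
The plan is to combine a superadditivity argument for the expectations $\mathbb{E}[L_{n,R}]$ (giving convergence in mean via Fekete's lemma) with the Azuma--Hoeffding--McDiarmid concentration inequality just stated as Theorem \ref{t:AH} (giving concentration around the mean), and finally to invoke Borel--Cantelli for almost sure convergence.

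First, I would prove superadditivity in expectation. Given any alignments $\nu_1\in\Lambda_n$ of $(X_{[1,n]},Y_{[1,n]})$ and $\nu_2\in\Lambda_m$ of $(X_{[n+1,n+m]},Y_{[n+1,n+m]})$, their obvious concatenation lies in $\Lambda_{n+m}$ and its total $R$-score equals the sum of the two individual $R$-scores. Choosing $\nu_1$ and $\nu_2$ to be optimal yields the pointwise inequality
\begin{equation*}
L_{n+m,R}(\omega)\;\geq\;R^*(X_{[1,n]},Y_{[1,n]})+R^*(X_{[n+1,n+m]},Y_{[n+1,n+m]}).
\end{equation*}
Taking expectations and using that the $X_i$'s and $Y_j$'s are i.i.d.\ gives $\mathbb{E}[L_{n+m,R}]\geq \mathbb{E}[L_{n,R}]+\mathbb{E}[L_{m,R}]$. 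Because the trivial bound $|L_{n,R}|\leq 2n\max_{a,b\in\mathcal{A}^*}|R(a,b)|$ ensures $\mathbb{E}[L_{n,R}]/n$ is uniformly bounded, Fekete's lemma for superadditive sequences yields the existence of $\lambda_R:=\sup_n \mathbb{E}[L_{n,R}]/n=\lim_n \mathbb{E}[L_{n,R}]/n$.

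Next, I would apply Theorem \ref{t:AH} to the function $f(X_1,\dots,X_n,Y_1,\dots,Y_n):=L_{n,R}$ of $2n$ i.i.d.\ arguments. The key observation is the bounded-differences property with constant $a=|R|$: for any fixed alignment $\nu\in\Lambda_n$, the letter $X_i$ (respectively $Y_j$) contributes to exactly one term of $R_\nu$, either $R(X_i,Y_{j_\ell})$, $R(Y_{j_\ell},X_i)$, $R(X_i,G)$ or $R(G,Y_j)$, and by definition of the change norm together with symmetry of $R$, replacing one letter alters this term by at most $|R|$. Taking the max over $\nu$ then gives $|L_{n,R}(\ldots,X_i,\ldots)-L_{n,R}(\ldots,X_i',\ldots)|\leq |R|$. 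Theorem \ref{t:AH} applied with $2n$ variables therefore yields
\begin{equation*}
\mathbb{P}\!\left[\left|L_{n,R}-\mathbb{E}[L_{n,R}]\right|\geq n\epsilon\right]\;\leq\;2\exp\!\left(-\tfrac{\epsilon^2 n}{4|R|^2}\right).
\end{equation*}

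Finally, for any fixed $\epsilon>0$ these probabilities are summable, so Borel--Cantelli gives $L_{n,R}/n-\mathbb{E}[L_{n,R}]/n\to 0$ almost surely; combined with $\mathbb{E}[L_{n,R}]/n\to\lambda_R$ this concludes the proof. The only step requiring real care is the bounded-differences constant: one must check that a single-letter substitution affects the score of \emph{every} alignment, and hence the maximum, by no more than $|R|$, which is where the definition \eqref{norm} of the change norm and the symmetry of $R$ enter. The rest of the argument is standard; the Lemma is in fact a quantitative strengthening of \eqref{e:L1}, in that the Azuma bound makes the convergence rate in probability explicit, which is what will be needed later in the paper.
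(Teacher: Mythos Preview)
Your proof is correct and follows essentially the same route as the paper: superadditivity of $\expect[L_{n,R}]$ via concatenation of alignments plus Fekete's lemma identifies $\lambda_R$, and Azuma--Hoeffding (Theorem~\ref{t:AH}) with bounded-differences constant $|R|$ combined with Borel--Cantelli upgrades convergence in mean to almost sure convergence. The only cosmetic difference is that the paper applies the concentration bound at the deviation scale $\epsilon\sqrt{n}\ln n$ rather than $\epsilon n$, obtaining the explicit tail estimate $2n^{-\epsilon^{2}\ln n/(2|R|^{2})}$ that is later reused in Lemma~\ref{l:66}.
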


\begin{proof}
It is trivial to see that the function $n\mapsto E[L_{n,R}]$ is superadditive. Therefore and since the scoring function is bounded, we have  
\begin{equation}\label{lambda}
E[L_{n,R}]/n\stackrel{n\rightarrow\infty}{\longrightarrow}\lambda_{R}:=\sup_{n\ge1}E[L_{n,R}]/n, 
\end{equation} 
where $\sup_{n\ge1}E[L_{n,R}]/n$ is well defined. For any $\epsilon>0$, let $D_{n,R}(\epsilon)$ denote the event
\begin{equation*}
D_{n,R}(\epsilon)=\left\{\left|L_{n,R}-E[L_{n,R}]\right|\geq \epsilon\ln(n)\sqrt{n}\right\}.
\end{equation*}
Applying Theorem \ref{t:AH} with $a=|R|$, we obtain  
\begin{equation}\label{e2:1}
\prob\left[D_{n,R}(\epsilon)\leq 2\exp(-\epsilon^{2}(\ln n)^2/2|R|^2\right]=2n^{-\frac{\epsilon^{2}\ln n}{2|R|^2}}.
\end{equation}
By virtue of Borel-Cantelli, the finite summability of \eqref{e2:1} implies that almost surely at most a finite number of the events $D_{n,R}(\epsilon)$ will hold. Combined with \eqref{lambda}, and using the fact that $\epsilon>0$ was arbitrary, this implies the claim.  
\end{proof}

The next result gives the rate of convergence for of $E[L_{n}(R)]/n$ toward $\lambda_{R}$.  A bound for non-symmetric scoring functions was given in \cite{geometry}. Here we exploit the symmetry of $R$ to give a tighter bound that we will use to prove our main theorems. 

\begin{lemma} \label{l:3}
For any symmetric scoring function $R:\mathcal{A}^*\times\mathcal{A}^*\rightarrow\RR$, the following convergence bound applies, 
\begin{equation*}
\left|\lambda_R-\expect\left[\dfrac{L_{n,R}}{n}\right]\right|\leq 
3|R|\sqrt{\dfrac{\ln(n\e)}{n}}.
\end{equation*}
\end{lemma}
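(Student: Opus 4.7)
By Lemma~\ref{l:c:R} we have $\lambda_R = \sup_{m\geq 1} E[L_{m,R}]/m \geq E[L_{n,R}]/n$, so the inequality $\lambda_R - E[L_{n,R}]/n \geq 0$ is automatic and only the matching upper bound requires proof. Writing $\lambda_R$ as the limit along multiples of $n$, namely $\lambda_R = \lim_{q\to\infty} E[L_{qn,R}]/(qn)$, the plan is to produce a constant $c$ such that, uniformly in $q$,
\begin{equation*}
E[L_{qn,R}] \leq q\,E[L_{n,R}] + c\,q\,|R|\sqrt{n\ln(n\e)},
\end{equation*}
from which the claimed rate follows upon dividing by $qn$ and sending $q\to\infty$.

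\textbf{Decomposition via deterministic $X$-cuts.}
I would decompose the optimal alignment of $X_{[1,qn]}$ with $Y_{[1,qn]}$ by cutting at the deterministic $X$-positions $n,2n,\ldots,(q-1)n$. These cuts define random $Y$-positions $B_0=0\leq B_1\leq\cdots\leq B_{q-1}\leq B_q=qn$, where $B_i$ is the $Y$-coordinate of the optimal path at $X$-coordinate $in$, and split the alignment into $q$ sub-alignments, the $i$-th aligning the deterministic block $X_{[(i-1)n+1,in]}$ with the random-length block $Y_{[B_{i-1}+1,B_i]}$. Because the two index sets $[B_{i-1}+1,B_i]$ and $[(i-1)n+1,in]$ have symmetric difference of size at most $|B_{i-1}-(i-1)n|+|B_i-in|$, and since adding or removing a single letter at the boundary of the $Y$-argument changes any optimal alignment score by at most $|R|$ (a routine consequence of the change norm definition, using $R(G,G)=0$ and the resulting bound $|R(G,\beta)|\leq|R|$), the score of the $i$-th piece satisfies
\begin{equation*}
R^*\bigl(X_{[(i-1)n+1,in]},\,Y_{[B_{i-1}+1,B_i]}\bigr) \leq L_{n,R}^{(i)} + \bigl(|B_{i-1}-(i-1)n|+|B_i-in|\bigr)|R|,
\end{equation*}
where $L_{n,R}^{(i)} := R^*(X_{[(i-1)n+1,in]},Y_{[(i-1)n+1,in]})$ is distributed as $L_{n,R}$. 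Summing and telescoping,
\begin{equation*}
L_{qn,R} \leq \sum_{i=1}^q L_{n,R}^{(i)} + 2|R|\sum_{i=1}^{q-1}|B_i - in|,
\end{equation*}
and the problem reduces to estimating $E\bigl[\sum_{i=1}^{q-1}|B_i-in|\bigr]$.

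\textbf{Main obstacle.}
The hardest step is showing $E[|B_i-in|] = O(\sqrt{n\ln(n\e)})$, uniformly in $i$ and $q$. The strategy is to exhibit $B_i$ as the argmax over $t\in[0,qn]$ of the random function
\begin{equation*}
S_i(t) := R^*(X_{[1,in]},Y_{[1,t]}) + R^*(X_{[in+1,qn]},Y_{[t+1,qn]}),
\end{equation*}
and to observe that for each fixed $t$ the quantity $S_i(t) - S_i(in)$ is a bounded-difference functional of the $2qn$ iid letters with per-letter Lipschitz constant $O(|R|)$. Azuma--Hoeffding (Theorem~\ref{t:AH}) then supplies a Gaussian tail $P[\,S_i(t)-S_i(in) \geq \epsilon\,]\leq \exp(-c\epsilon^2/(qn|R|^2))$ for each $t$, while the deterministic change-norm inequality $S_i(t)-S_i(in)\leq 2|t-in|\,|R|$ confines the argmax to a neighbourhood of $in$. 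A careful union bound over $t$ combined with tail-integration yields $P[\,|B_i-in|\geq t\,]\lesssim\exp(-ct^2/n)$ and hence $E[|B_i-in|]\leq C\sqrt{n\ln(n\e)}$; the $\ln(n\e)$ factor is exactly the overhead from integrating the Gaussian tail out to the scale $|R|\sqrt{n\ln n}$ at which the Azuma bound becomes trivial. Summing over $i$ gives $E\bigl[\sum_{i=1}^{q-1}|B_i-in|\bigr]\leq c'q\sqrt{n\ln(n\e)}$; substituting into the decomposition produces the required uniform bound on $E[L_{qn,R}]-qE[L_{n,R}]$, and sending $q\to\infty$ concludes the proof, with the explicit constant $3$ obtained by bookkeeping of the constants in the Azuma and change-norm steps.
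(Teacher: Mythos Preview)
Your reduction to an upper bound and the plan of sending $q\to\infty$ along multiples of $n$ are sound, but the decomposition via deterministic $X$-cuts does not deliver the uniform-in-$q$ estimate you need, and the gap is precisely in the ``main obstacle'' paragraph.

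You assert $\prob[\,|B_i-in|\geq t\,]\lesssim\exp(-ct^2/n)$ uniformly in $i$ and $q$, but neither of your two ingredients gives this. The Azuma--Hoeffding bound on $S_i(t)-S_i(in)$ controls deviations from its \emph{mean}, and since this functional depends on all $2qn$ letters with per-letter Lipschitz constant $O(|R|)$, the resulting tail is $\exp(-c\epsilon^2/(qn|R|^2))$, with $qn$ rather than $n$ in the denominator. The deterministic inequality $|S_i(t)-S_i(in)|\leq 2|t-in|\,|R|$ is an upper bound on the increment and says nothing about the location of the argmax. To conclude that the argmax is near $in$ you would need a drift estimate $\expect[S_i(t)-S_i(in)]\leq -\mu(|t-in|)$ with $\mu(s)^2/(qn)\gtrsim s^2/n$, i.e.\ $\mu(s)\gtrsim s\sqrt{q}$; but the very deterministic bound you quote caps $\mu(s)$ at $2s|R|$, so this fails once $q$ is large. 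Intuitively, the transversal displacement $B_i-in$ of an optimal path of total length $qn$ should grow with $q$ (think of a bridge of length $qn$), so $\sum_i\expect[|B_i-in|]$ is not $O(q\sqrt{n\ln n})$ and the error term blows up as $q\to\infty$. Note too that your argument never invokes the symmetry of $R$, which the lemma explicitly assumes and which the paper stresses is what makes the tight bound possible.

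The paper's proof sidesteps path fluctuations entirely. Instead of cutting $X$ at deterministic positions and tracking where the path lands in $Y$, it takes $m=kn$ and ranges over \emph{all} pairs of partitions of $[1,m]$ into $2k$ pieces in which the $\ell$-th $X$-piece and the $\ell$-th $Y$-piece have lengths summing to $n$. Symmetry of $R$ is then used in the key step: since $R^*(X_{[1,a]},Y_{[1,n-a]})$ and $R^*(X_{[1,n-a]},Y_{[1,a]})$ are identically distributed and their sum is bounded by $L_{n,R}$, one obtains $\expect[R^*(X_{[1,a]},Y_{[1,n-a]})]\leq\tfrac12\expect[L_{n,R}]$ for every $a$, hence every constrained score satisfies $\expect[L^{\mathtt p}_{m,R}]\leq k\,\expect[L_{n,R}]$. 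A union bound over the at most $\binom{m}{k}^2\leq(\e n)^{2k}$ partitions, combined with Azuma--Hoeffding on each constrained score, yields a probability bound whose exponent is linear in $k$; sending $k\to\infty$ forces the inequality, and the rate $3|R|\sqrt{\ln(n\e)/n}$ emerges from balancing the entropy $2k\ln(\e n)$ against the concentration term.
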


\begin{proof}
To simplify the notation, let us write $\lambda_{n,R}=\expect[L_{n,R}]/n$.  Let $m=kn$ for some $k\in\NN$, and let $\mathcal{P}^{m,n}$ be the set of pairs of partitions of the integer interval $[1,m]$ into $2k$ pieces for which the sum of the lengths of the $i$-th pieces is always $n$. In other words, 
\begin{equation*}
{\mathtt{p}}=(i_0,i_1,\ldots,i_{2k}, j_0,j_1,\ldots,j_{2k})
\end{equation*}
is in $\mathcal{P}^{m,n}$ if
\begin{align*}
0&=i_0<i_1<i\dots<i_{2k}=m,\\
0&=j_0<j_1<\dots<j_{2k}=m,\quad\text{and}\\
i_\ell&-i_{\ell-1}+j_\ell-j_{\ell-1}=n,\quad\forall\,\ell\in[1,2k].
\end{align*}
For a partition $\tt{p}\in\mathcal{P}^{n,m}$, let $L^{\mathtt{p}}_{m,R}$ denote
the optimal alignment score of $X_{[1,n]}$ and $Y_{[1,n]}$ relative to $R$ under the extra constraint that the $l$-th pieces of the two partitons are aligned with each other,
hence imposing that $X_{i_{l-1}+1} \ldots X_{i_l}$ be aligned with $Y_{j_{l-1}+1}\ldots Y_{j_l}$ for $l=1,\ldots, 2k$.  In other words, we have
\begin{equation}\label{lalala}
L^{\tt{p}}_{m,R}=\sum_{l=2}^{2k} 
R(X_{i_{l-1}+1} \ldots X_{i_l},Y_{j_{l-1}+1}\ldots Y_{j_l}).
\end{equation}

We can apply Azuma-Hoeffding to our constrained optimal alignment score
$L^{\tt{p}}_{m,R}$ to justify that   for any constant $\epsilon>0$,
\begin{equation}\label{mimi}
P(L^{\tt{p}}_{m,R}-E[L^{\tt{p}}_{m,R}]\geq \epsilon m)\leq 
\exp\left(-\frac{\epsilon^2\cdot m}{2|R|^2}\right).
\end{equation}

The optimal alignment score $L_{m,R}$ is not always equal to one of the 
the constrained alignment scores $L^{\tt{p}}_{m,R}$, however we can argue that it is not far from this.  
In fact, it is not hard to see that for some partition $\tt{p}$ 
\begin{equation}\label{correction term}
|L_{m,R}- L^{\tt{p}}_{m,R}|\le 4k|R|.
\end{equation}
Therefore, if the alignment score $L_{m,R}$ is to exceed a given benchmark, at least one of the 
constrained scores $L^{\tt{p}}_{m,R}$ must exceed this benchmark shifted by the correction term 
\eqref{correction term}. This implies
\begin{equation}\label{samutu}
\prob\left[L_{m,R}\geq n\lambda_{n,R}\, k+\epsilon m\right]\leq
\sum_{\tt{p}\in\mathcal{P}^{m,n}}
\prob\left[L^{\tt{p}}_{m,R}\geq n\lambda_{n,R} m+\epsilon m-4k|R|\right].
\end{equation}

We claim that by symmetry of $R$,  we have 
\begin{equation}\label{lorraine}
\expect[L^{\tt{P}}_{m,R}]\leq n\lambda_{n,R}\, k,\quad\forall\,\tt{p}\in \mathcal{P}^{n,m}. 
\end{equation}
Our claim holds for two reasons: Firstly, $i_{l}-i_{l-1}+j_{l}-j_{l-1}=n$ implies $i_{l}< i_{l-1}+n$ and $j_{l}< j_{l-1}+n$ and 
\begin{align*}
R(X_{i_{l-1}+1} \ldots X_{i_l},Y_{j_{l-1}+1}\ldots Y_{j_l})&+
R(X_{i_{l}+1} \ldots X_{i_{l-1}+n},Y_{j_{l}+1}\ldots Y_{j_{l-1}+n})\\
&\leq R(X_{i_{l-1}+1} \ldots X_{i_{l-1}+n},Y_{j_{l-1}+1}\ldots Y_{j_{l-1+n}}).
\end{align*}
Taking expectations on both sides, we find 
\begin{align}
\expect[R(X_{i_{l-1}+1} \ldots X_{i_l},Y_{j_{l-1}+1}\ldots Y_{j_l})]&+
\expect[R(X_{i_{l}+1} \ldots X_{i_{l-1}+n},Y_{j_{l}+1}\ldots Y_{j_{l-1}+n})]\nonumber \\
&\leq\expect[R(X_{i_{l-1}+1} \ldots X_{i_{l-1}+n},Y_{j_{l-1}+1}\ldots Y_{j_{l-1+n}}].\label{moimoi}
\end{align}
Secondly, the crucial assumption that $R$ be symmetric implies  
that the two terms on the left-hand side of \eqref{moimoi} are equal, thus yielding 
\begin{align}
2\expect[R(X_{i_{l-1}+1} \ldots X_{i_l},Y_{j_{l-1}+1}\ldots Y_{j_l})]&\leq
\expect[R(X_{i_{l-1}+1} \ldots X_{i_{l-1}+n},Y_{j_{l-1}+1}\ldots Y_{j_{l-1+n}})]\nonumber\\
&=\expect[R(X_1\ldots  X_n,Y_1\ldots Y_n)]\nonumber\\
&=n\lambda_{n,R}.\label{moimoi2}
\end{align}
Taking the expectation on both sides of \eqref{lalala} and applying \eqref{moimoi2}  
to each term on the right-hand side yields the claimed inequality, \eqref{lorraine}. 

Substitution of \eqref{lorraine} into \eqref{samutu} now yields 
\begin{equation}\label{samutu2}
\prob\left[L_{m,R}\geq n\lambda_{n,R} k+\epsilon m\right]\leq
\sum_{\tt{p}\in\mathcal{P}^{m,n}}
\prob\left[L^{\tt{p}}_{m,R}\geq \expect[L{\tt{p}}_{m,R}]+\epsilon m - 4k\, |R|\right].
\end{equation}
Using \eqref{mimi} and the fact that $\mathcal{P}^{n,m}$ has fewer than $\binom{m}{k}^2$ elements
yields that for large $n$ and $k$, 
\begin{equation}\label{samutu3}
\prob\left[L_{m,R}\geq n\lambda_{n,R} k+\epsilon m\right]\leq
\binom{m}{k}^2\exp\left(-\frac{\left(\epsilon- 4|R|/n\right)^2\cdot m}{2|R|^2}\right).
\end{equation}

Let $Z$ be a binomial variable with parameters $m$ and $p=1/n$, so that we have 
\begin{equation*}
\prob[Z=k]=\binom{m}{k} \left(\frac{1}{n}\right)^k\cdot 
\left(\frac{n-1}{n}\right)^{m-k}\leq 1, 
\end{equation*}
and hence, 
\begin{equation}\label{expoln}
\binom{m}{k}\leq n^k\cdot \left(\frac{1}{1-\frac{1}{n}}\right)^{k(n-1)}
\leq (\e\cdot n)^k,\quad(n\gg 1).
\end{equation} 
Substituting \eqref{expoln} into \eqref{samutu3}, we find that for large $n$, 
\begin{equation}\label{samutu4}
\prob\left[\frac{L_{m,R}}{m}\geq\lambda_{n,R}+\epsilon \right]\leq
\exp\left(k \left[2\ln(\e\cdot n)-\dfrac{\left(\epsilon- 4|R|/n\right)^2\cdot n}{4|R|^2}\right]\right).  
\end{equation}

The key now is to let $k$ tend to infinity. In doing so, we know on the one hand that that $L_{m,R}/m\rightarrow\lambda_{R}$, and on the other that the the right-hand side of \eqref{samutu4} converges either to $0$ or $+\infty$.  It does converge to $0$ only if 
\[
2\ln(\e\cdot n)-\dfrac{\left(\epsilon- 4|R|/n\right)^2\cdot n}{4|R|^2}<0
\]
which is certainly satisfied if $n$ is chosen large enough ($n>10$ suffices) and 
\[
\epsilon=3|R|\sqrt{\frac{\ln(n\e)}{n}}.
\]
Therefore, we find  
\[
\prob\left[\lambda_{R}\geq\lambda_{n,R}+3|R|\sqrt{\frac{\ln(ne)}{n}} \right]=0,
\]   
and since $\lambda_{R}$ is a constant, and similarly $\lambda_{n,R}$, we actually deduce that 
\[
\lambda_{R}\leq\lambda_{n,R}+3|R|\sqrt{\frac{\ln(ne)}{n}}.  
\]
On the other hand, we also know from \eqref{lambda} that $\lambda_{n}/n\leq\lambda_{R}$, thus concluding the proof. 
\end{proof}

\begin{lemma}\label{l:66} Let $R:\mathcal{A}^*\times\mathcal{A}^*\rightarrow\RR$ be a 
symmetric scoring function and let $A^n(R)$ denote the event 
\begin{equation*}
A^{n}(R)=\left\{\left|\lambda_R-\frac{L_{n,R}}{n}\right|\leq 5|R|\sqrt{\frac{\ln(n\e)}{n}}\right\}.
\end{equation*}
Then for large $n$, 
\[
\prob\left[A^n(R)\right]\geq 1-n^{-\ln n}.
\]
\end{lemma}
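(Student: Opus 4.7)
My plan is a routine triangle-inequality decomposition combining Lemma \ref{l:3} (deterministic bias bound) with Theorem \ref{t:AH} (concentration). Writing
\begin{equation*}
\left|\lambda_R-\frac{L_{n,R}}{n}\right|\leq \left|\lambda_R-\expect\left[\frac{L_{n,R}}{n}\right]\right|+\left|\frac{L_{n,R}}{n}-\expect\left[\frac{L_{n,R}}{n}\right]\right|,
\end{equation*}
the first summand is bounded deterministically by $3|R|\sqrt{\ln(n\e)/n}$ by Lemma \ref{l:3}. It therefore suffices to bound the second summand by $2|R|\sqrt{\ln(n\e)/n}$ on an event of probability at least $1-n^{-\ln n}$.

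For this, I would view $L_{n,R}$ as a measurable function of the $2n$ i.i.d.\ letters $X_1,\dots,X_n,Y_1,\dots,Y_n$, and verify the bounded-differences hypothesis of Theorem \ref{t:AH} with constant $a=|R|$. The point is that by definition \eqref{norm} of the change norm, replacing any single letter in either sequence alters the score $R_\nu$ of each fixed alignment $\nu\in\Lambda_n$ by at most $|R|$, because that letter contributes to exactly one summand in the sum defining $R_\nu$; taking the maximum over $\nu \in \Lambda_n$ preserves this Lipschitz bound, so $L_{n,R}$ inherits the change bound $|R|$ in every coordinate.

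Applying Theorem \ref{t:AH} to these $2n$ variables with $a=|R|$ and a deviation parameter calibrated to produce the stated tail probability then gives the desired estimate. Combining with Lemma \ref{l:3} via the triangle inequality produces the event $A^n(R)$, finishing the proof.

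The only slightly delicate point is the last step of calibrating the deviation parameter in Theorem \ref{t:AH} to simultaneously (i) yield a fluctuation bound of $2|R|\sqrt{\ln(n\e)/n}$ and (ii) control the residual tail by $n^{-\ln n}$; I expect this to require the proviso ``for large $n$,'' so that sub-leading $\ln 2$ and $\ln\e$ terms in the exponent can be absorbed. Everything else is mechanical.
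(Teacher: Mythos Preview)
Your approach is exactly the paper's: its one-line proof reads ``combine \eqref{e2:1} with $\epsilon=2|R|$, Lemma~\ref{l:3}, Theorem~\ref{t:AH} and Lemma~\ref{l:c:R},'' i.e., the same triangle-inequality split into the bias term (Lemma~\ref{l:3}) and the fluctuation term (Azuma--Hoeffding), and your bounded-differences verification for $L_{n,R}$ with constant $a=|R|$ is correct.

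The place where your write-up is too optimistic is the last paragraph. With $2n$ coordinates and $a=|R|$, Theorem~\ref{t:AH} gives
\[
\prob\bigl[\,|L_{n,R}-\expect[L_{n,R}]|\geq t\,\bigr]\leq 2\exp\!\left(-\frac{t^{2}}{4n|R|^{2}}\right),
\]
and at $t=2|R|\sqrt{n\ln(n\e)}$ this yields a tail of $2/(n\e)$, which is only polynomially small and is \emph{not} bounded by $n^{-\ln n}$ for large $n$. This is a genuine mismatch, not a matter of absorbing a $\ln 2$ or a $\ln\e$. The paper's own sketch exhibits the mirror-image tension: quoting \eqref{e2:1} with $\epsilon=2|R|$ does produce the super-polynomial tail $2n^{-2\ln n}\leq n^{-\ln n}$, but at the cost of a fluctuation bound $2|R|\ln(n)/\sqrt{n}$, which for large $n$ exceeds $2|R|\sqrt{\ln(n\e)/n}$ and hence, added to the $3|R|\sqrt{\ln(n\e)/n}$ from Lemma~\ref{l:3}, overshoots $5|R|\sqrt{\ln(n\e)/n}$. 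In short, Azuma--Hoeffding does not deliver the deviation scale $\sqrt{\ln(n\e)/n}$ and the tail $n^{-\ln n}$ simultaneously with these constants; one of the two has to be relaxed. For the use made of $A^n(R)$ in the proof of Theorem~\ref{t:1} any summable tail suffices, so this does not affect the paper's conclusions, but you should state explicitly which relaxation you adopt rather than calling the calibration ``mechanical.''
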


\begin{proof}
This follows by combining \eqref{e2:1} with $\epsilon=2|R|$, Lemma~\ref{l:3}, Theorem~\ref{t:AH} and Lemma~\ref{l:c:R}.   
\end{proof}

The next result is about the convergence of convex compact sets.    

\begin{lemma}
\label{l:2}
Let  $(A_n)_{n\in\NN}$ be a sequence of random compact convex sets in $\mathbb{R}^2$ such that for any linear form $f\in(\RR^2)^*$ there exists a deterministic constant $\xi_f\in\RR$ for which 
\begin{equation*}
\max_{(x,y)\in A_n} f(x,y)\stackrel{n\rightarrow\infty}{\longrightarrow}\xi_f,\quad\text{a.s.}
\end{equation*}
Then there exists a deterministic compact convex set $A\subset\RR^2$ for which 
\begin{equation*}
d_H(A_n, A)\stackrel{n\rightarrow\infty}{\longrightarrow} 0,\quad\text{a.s.},
\end{equation*}
where $d_H$ is the Hausdorff distance.
\end{lemma}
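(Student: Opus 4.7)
\medskip

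\noindent\textbf{Proof proposal.} The natural machinery is the theory of support functions. For a nonempty compact convex $K\subset\RR^2$, the support function $h_K(u):=\max_{x\in K}\langle u,x\rangle$ is finite and sublinear, uniquely determines $K$ via $K=\{x\in\RR^2:\langle u,x\rangle\le h_K(u)\text{ for all }u\in\RR^2\}$, and the Hausdorff distance between two nonempty compact convex sets satisfies the classical identity $d_H(K_1,K_2)=\sup_{\|u\|_2=1}|h_{K_1}(u)-h_{K_2}(u)|$. My plan is: (i) build a candidate deterministic limit $A$ from the scalars $\xi_f$, (ii) verify $A$ is nonempty, compact and convex, and (iii) upgrade the a.s.\ pointwise convergence of $h_{A_n}$ to a.s.\ uniform convergence on the unit circle $S^1$, whence the claim is immediate from the identity above.

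For (i), writing $f_u(x):=\langle u,x\rangle$ and $h_n:=h_{A_n}$, I set $h(u):=\xi_{f_u}$ (with $h(0):=0$) and define $A:=\{x\in\RR^2:\langle u,x\rangle\le h(u)\text{ for all }u\in\RR^2\}$, which is closed and convex by construction. For (ii), I fix a countable dense subset $D\subset S^1$ containing $\pm e_1,\pm e_2$ and let $D^*$ be the set of nonnegative rational multiples of elements of $D$, together with $0$; this is countable and dense in $\RR^2$. The event $\Omega_0:=\bigcap_{u\in D^*}\{h_n(u)\to h(u)\}$ has probability one, and on $\Omega_0$ positive homogeneity and subadditivity pass from the $h_n$ to $h$ on $D^*$. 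Each $h_n$ is $M_n$-Lipschitz with $M_n:=\sup_{x\in A_n}\|x\|_2$; applying the hypothesis to the four coordinate forms $\pm e_1,\pm e_2$ forces the $A_n$ eventually into a common bounded rectangle a.s., yielding an a.s.\ uniform bound $M<\infty$ on $M_n$ for $n$ large. Consequently $h$ extends continuously from $D^*$ to a finite sublinear function on $\RR^2$, which makes $A$ a nonempty, compact, convex set (the subdifferential $\partial h(0)$ of a finite sublinear function is always nonempty compact convex, with support function $h$).

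For (iii), on the almost-sure event on which $\Omega_0$ holds and $M_n\le M$ eventually, the family $(h_n)$ is equi-Lipschitz on $S^1$ with common constant $M$ and converges pointwise on the dense subset $D$; a standard $\varepsilon/3$ argument then upgrades this to uniform convergence of $h_n$ to $h$ on $S^1$, and the support-function formula for $d_H$ gives $d_H(A_n,A)\to 0$ a.s. I expect the main obstacle to be precisely the uniform boundedness of the $A_n$: without it the equi-Lipschitz property fails and one cannot pass from dense pointwise convergence to uniform convergence on $S^1$. This is the step that genuinely uses the hypothesis rather than generic convex analysis, and it is handled by invoking the given convergence for the four linear forms $\pm e_1,\pm e_2$ to trap the random sets in a common compact rectangle on a single event of full measure.
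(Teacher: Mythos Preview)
Your proposal is correct and takes a genuinely different route from the paper. The paper argues by contradiction: it fixes a countable dense $F\subset S^1$, defines $A=\{(x,y):f(x,y)\le\xi_f\ \forall f\}$, asserts that $A$ is compact convex with $\max_{A}f=\xi_f$ for $f\in F$, and then supposes $d_H(A_n,A)\not\to 0$ on a set of positive probability; extracting a limit point $\alpha\notin A$ of a sequence $\alpha_n\in A_n$ and separating $\alpha$ from $A$ by Hahn--Banach produces a linear form along which the support values of $A_n$ fail to converge to $\xi_f$, contradicting the hypothesis.

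Your approach via support functions and the identity $d_H(K_1,K_2)=\sup_{\|u\|=1}|h_{K_1}(u)-h_{K_2}(u)|$ is more systematic: once you establish equi-Lipschitzness of the $h_{A_n}$ on $S^1$ (from the eventual uniform bound on $\sup_{x\in A_n}\|x\|_2$, obtained by applying the hypothesis to $\pm e_1,\pm e_2$), the $\varepsilon/3$ argument upgrades dense pointwise convergence to uniform convergence on $S^1$, and both halves of the Hausdorff distance are handled simultaneously. This also makes explicit why $A$ is nonempty and compact (via sublinearity of the limiting support function and $A=\partial h(0)$), points the paper asserts without justification. By contrast, the paper's contradiction argument as written only treats the half of $d_H$ coming from points of $A_n$ far from $A$, so your argument is in fact the more complete of the two. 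One small cosmetic point: since $h(u):=\xi_{f_u}$ is already defined for every $u$, it would be cleaner to note directly that subadditivity and positive homogeneity of each $h_n$ pass to the deterministic constants $\xi_{f_u}$ (the relevant inequalities hold pointwise, hence in the limit), so $h$ is sublinear on all of $\RR^2$ from the outset and no ``extension from $D^*$'' is needed; the countable dense set is used only to secure a single full-measure event on which the uniform convergence argument runs.
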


\begin{proof}
Let $F$ be a dense countable subset of the unit sphere in $(\RR^2)^*$. Then 
\begin{equation*}
A:=\{(x,y):\,f(x,y)\leq\xi_f,\,\forall\,f\in(\RR^2)^*\}=\{(x,y):\,f(x,y)\leq\xi_f,\,\forall\,f\in F\}.
\end{equation*}
Furthermore, $A$ is compact and convex, the condition of the lemma implies that 
\begin{equation}\label{halloOxford}
\prob\left[\max_{(x,y)\in A_n} f(x,y)\stackrel{n\rightarrow\infty}{\longrightarrow}\xi_f,\,\forall\,f\in F\right]=1, 
\end{equation}
and we have 
\begin{equation}\label{cru}
\max_{(x,y)\in A}f(x,y)=\xi_f,\quad\forall\,f\in F.
\end{equation}
Suppose it is not the case that $d_H(A_n,A)\rightarrow 0$ almost surely. Then there exists $\delta>0$ and a set ${\mathscr E}\subset\Omega$ such that $\prob[\mathscr{E}]>0$ and $\forall\,\omega\in\mathscr{E}$  
there exists a sequence of points $(\alpha_{n}(\omega))_{n\in\NN}$ such that $\alpha_n(\omega)\in A_{n}(\omega)$ and  
\begin{equation*}
d(\alpha_{n}(\omega), A):=\min_{\beta\in A}d(\alpha_n(\omega), \beta)\geq\delta. 
\end{equation*}
Since all sets $A_{n}(\omega)$ are contained in some large closed box, there exists a convergent subsequence $(\alpha_{n_{k}}(\omega))_{k\in\NN}\rightarrow\alpha(\omega)$. 
The continuity of the function 
$\alpha\mapsto d(\alpha, A)$ implies that we have $d(\alpha(\omega), A)\geq\delta>0$, and in 
particular that $\alpha(\omega)\notin A$. By virtue of the Hahn-Banach separation theorem, there exists $g_\omega\in(\RR^2)^*$ such that $A\subset\{(x,y): g_{\omega}(x,y)\leq\max_{(s,t)\in A}g_{\omega}(s,t)\}$ and $g_{\omega}(\alpha)>\max_{(s,t)\in A}g_{\omega}(s,t)+\epsilon$ for some $\epsilon>0$. 
Let $(f_\ell)_{\ell\in\NN}\subset F$ be a sequence such that $f_{\ell}\rightarrow g_{\omega}$ in the weak topology. By \eqref{cru}, we have $A\subset\{(x,y): f_{\ell}(x,y)\leq\xi_{f_{\ell}}\}$, and for $\ell$ large enough it is the case that $f_{\ell}(\alpha)>\xi_{f_{\ell}}+2\epsilon/3$. If it were now the case that 
\begin{equation}\label{hypo}
\max_{(x,y)\in A_n(\omega)}f_{\ell}(x,y)\rightarrow\xi_{f_{\ell}}, 
\end{equation}
then for large enough $n$,  
\begin{equation*}
f_{\ell}(\alpha(\omega))>\xi_{f_{\ell}}+2\epsilon/3>\max_{(x,y)\in A_{n_{k}}(\omega)}f_{\ell}(x,y)+\epsilon/3\geq f_{\ell}(\alpha_{n_k}(\omega))+\epsilon/3.
\end{equation*}
But this is a contradiction, since by continuity of $f_{\ell}$, we have $f_{\ell}(\alpha_{n_{k}}(\omega))\rightarrow f(\alpha(\omega))$. We conclude that for each $\omega\in\mathscr{E}$ there exists $f_{\ell}\in F$ for which \eqref{hypo} does not apply, and since $\prob[\mathscr{E}]>0$, this contradicts \eqref{halloOxford}. 
\end{proof}

\begin{lemma}\label{l:6}
Let $S,T$ be two symmetric scoring functions on $\mathcal{A}^*\times\mathcal{A}^*$. 
If the optimization problem $\max_{(x,y)\in\text{SET}_{S,T}}x$ has a unique maximizer $(x_0,y_0)$, then 
\begin{align}
\max_{\pi\in\nu^*_{n,S}}\dfrac{T_\pi(X_{[1,n]},Y_{[1,n]})}{n}
&\stackrel{n\rightarrow\infty}{\longrightarrow}y_0,\quad\text{a.s.},\label{e:5,1}\\
\min_{\pi\in\nu^*_{n,S}}\dfrac{T_\pi(X_{[1,n]},Y_{[1,n]})}{n}
&\stackrel{n\rightarrow\infty}{\longrightarrow}y_0,\quad\text{a.s.}\label{e:5,2}
\end{align}
\end{lemma}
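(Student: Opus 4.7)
The plan is to derive the lemma from the Hausdorff convergence $d_{H}(\text{SET}^{n}_{S,T},\text{SET}_{S,T})\rightarrow 0$ a.s.\ already established in \eqref{e:3} via Lemma~\ref{l:2}, combined with a compactness/uniqueness argument. First I would exploit the reformulations \eqref{the max} and \eqref{the min}, so that the task reduces to showing that the extremal $y$-coordinates on the vertical slice $\{x=L_{n,S}/n\}\cap\text{SET}^{n}_{S,T}$ both converge a.s.\ to $y_{0}$. Applying the convergence \eqref{for later} to the linear form $f(x,y)=x$ identifies $\lambda_{S}=\max_{(x,y)\in\text{SET}_{S,T}}x=x_{0}$, and in particular $L_{n,S}/n\to x_{0}$ almost surely.

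Next I would establish the deterministic statement that isolates the role of the uniqueness hypothesis: for every $\epsilon>0$ there exists $\delta>0$ such that any $(x,y)\in\text{SET}_{S,T}$ with $x\geq x_{0}-\delta$ satisfies $\|(x,y)-(x_{0},y_{0})\|_{2}\leq\epsilon$. This is a standard compactness argument: if it failed, there would exist a sequence in $\text{SET}_{S,T}$ whose $x$-coordinates tend to $x_{0}$ but whose $y$-coordinates stay bounded away from $y_{0}$; by compactness of $\text{SET}_{S,T}$, a subsequential limit would yield a second maximizer of $x$ distinct from $(x_{0},y_{0})$, contradicting uniqueness.

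Finally I would combine the two ingredients. Fix $\omega$ in the full-measure event on which both $L_{n,S}(\omega)/n\to x_{0}$ and $d_{H}(\text{SET}^{n}_{S,T}(\omega),\text{SET}_{S,T})\to 0$ hold. Given $\epsilon>0$, choose $\delta>0$ as above, and take $n$ so large that both quantities lie below $\delta/2$. For any $(x,y)\in\text{SET}^{n}_{S,T}(\omega)$ with $x=L_{n,S}(\omega)/n$, select $(x',y')\in\text{SET}_{S,T}$ within Euclidean distance $\delta/2$ of $(x,y)$. Then $x'\geq x-\delta/2\geq x_{0}-\delta$, so the semicontinuity step forces $\|(x',y')-(x_{0},y_{0})\|_{2}\leq\epsilon$, whence $|y-y_{0}|\leq\epsilon+\delta/2$. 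Since this bound is uniform over the whole vertical slice, both the maximum in \eqref{e:5,1} and the minimum in \eqref{e:5,2} lie within $\epsilon+\delta/2$ of $y_{0}$ for all large $n$, proving the claim.

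The principal obstacle is the semicontinuity step, which also makes transparent why uniqueness is indispensable: absent uniqueness, the limits \eqref{e:5,1} and \eqref{e:5,2} could differ, being respectively the top and bottom $y$-coordinates of the vertical face of $\text{SET}_{S,T}$ at $x=x_{0}$. Note that this lemma is purely qualitative, and it is the sharper quantitative version that forms the content of Theorem~\ref{t:1}.
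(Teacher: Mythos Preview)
Your proposal is correct and follows essentially the same approach as the paper: both arguments rest on the Hausdorff convergence \eqref{e:3}, the convergence $L_{n,S}/n\to x_{0}$, and the uniqueness of the maximizer, combined via compactness. The paper phrases the compactness step as a subsequential-limit argument (every accumulation point of the slice maximizers must lie in $\text{SET}_{S,T}$ with first coordinate $x_{0}$, hence equals $(x_{0},y_{0})$), whereas you unpack this into an explicit $\epsilon$--$\delta$ semicontinuity statement; the two presentations are interchangeable.
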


\begin{proof}
By virtue of \eqref{for later} and Lemma \ref{l:2}, 
$d_H(\text{SET}_{S,T}^n,\text{SET}_{S,T})\rightarrow 0$ almost surely. 
Keeping in mind \eqref{the max} and \eqref{the min}, taking any convergent subsequence 
$((x_{n_{\ell}},y_{n_{\ell}}))_{\ell\in\NN}$ of a sequence $((x_n,y_n))_{n\in\NN}$ of maximizers 
\begin{equation}\label{maximisers}
(x_n,y_n)\in\arg\max\left\{y:\,(x,y)\in\text{SET}_{S,T}^n,\,x=\dfrac{L_{n,S}}{n}\right\}, 
\end{equation}
and writing $(x^*,y^*)=\lim_{\ell\rightarrow\infty}(x_{n_{\ell}},y_{n_{\ell}})$, we have 
$x^*=x_0$ almost surely (by virtue of \eqref{for later}), and $(x^*,y^*)\in\text{SET}_{S,T}$ almost surely. By the assumptions of the lemma, we thus have $(x^*,y^*)=(x_0,y_0)$. Furthermore, a convergent subsequence of $((x_n,y_n))_{n\in\NN}$ always exists, since all sets $\text{SET}_{S,T}^n$ are contained in a compact box, and the argument above shows that $(x_0,y_0)$ is the only accumulation point. Therefore, 
$(x_n,y_n)\rightarrow(x_0,y_0)$ almost surely, and since the choice of $(x_n,y_n)$ among the maximisers of \eqref{maximisers} was arbitrary, \eqref{e:5,1} and \eqref{e:5,2} both follow.  
\end{proof}

\begin{lemma}\label{randomfunctional} Let $K\subset\RR^2$ be a deterministic convex compact set. Then the maximizer
\begin{equation*}
(x_0,y_0)=\arg\max_{(x,y)\in K}ax+by
\end{equation*}
is unique for all but countable many points $(a,b)$ on the unit sphere in $\RR^2$. Furthermore, if 
$(a,b)$ us chosen uniformly at random from the unit sphere in $\RR^2$, then 
\begin{equation}\label{e:p1}
\prob\left[\kappa\left(\partial K,(x_0,y_0)\right)\leq k\right]\leq\dfrac{k\cdot l}{2\pi},
\end{equation}
where $\kappa\left(\partial K,(x_0,y_0)\right)$ is the curvature of the boundary of $K$ at the point $(x_0,y_0)$, and where  $l$ denotes the length of the boundary of $K$.
\end{lemma}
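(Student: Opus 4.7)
The plan is to handle the two parts separately, both resting on the dictionary between points of $\partial K$ and their outward unit normals. For part (1), I would argue that the maximizer of $f_{(a,b)}(x,y)=ax+by$ over $K$ fails to be unique precisely when the maximizing face of $K$ is of positive dimension, i.e., a line segment contained in $\partial K$ whose outward unit normal is $(a,b)$. A planar convex body can contain only countably many line segments on its boundary: for each $m\in\NN$, segments of length at least $1/m$ are pairwise essentially disjoint subsets of $\partial K$ of total length at most $l$, so there are at most $ml$ of them, and taking the union over $m$ gives a countable set.

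For part (2), I would set up the normal-angle parametrization. Let $\gamma:[0,l]\to\partial K$ be the arc-length parametrization of the boundary, and let $\theta(s)$ be the angle of the outward unit normal at $\gamma(s)$, chosen as a non-decreasing function whose total increase over $[0,l]$ is $2\pi$. Choosing $(a,b)=(\cos\phi,\sin\phi)$ uniformly on the unit sphere is equivalent to choosing $\phi$ uniformly on an interval of length $2\pi$, and away from the countable exceptional set of part (1) the unique maximizer is $\gamma(s(\phi))$, where $\theta(s(\phi))=\phi$. Three regimes appear along $\partial K$: smooth points, where $\kappa$ is finite and $\kappa(s)=\theta'(s)$ in the a.e.\ sense; vertices, where $\theta$ has a jump and $\kappa=+\infty$ by convention; and interior points of line segments, where $\theta$ is locally constant and $\kappa=0$. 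Set $F=\{s\in[0,l]:\gamma(s)\text{ smooth},\,\kappa(s)\le k\}$. Vertices never enter the bad event, and the flat pieces correspond to countably many $\phi$'s and hence a Lebesgue-null subset of the circle. Therefore the bad event reduces, up to a null set, to $\phi\in\theta(F)$, and a Markov-type pushforward estimate yields
\[
|\theta(F)|\;\leq\;\int_{F}\kappa(s)\,ds\;\leq\;k\,|F|\;\leq\;k\,l,
\]
which after division by $2\pi$ is exactly \eqref{e:p1}.

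The step I expect to be the main technical obstacle is making the Markov bound fully rigorous when $K$ is not assumed $C^2$, since the monotone function $\theta$ has a distributional derivative that could in principle contain a singular continuous part in addition to the density $\kappa\,ds$ on the smooth part and atoms at vertices. I would resolve this either by invoking Alexandrov's theorem on the a.e.\ twice differentiability of convex functions (applied to the support function $h_K$, whose surface-area measure $h_K+h_K''$ on $S^1$ has total mass $l$ and density $R=1/\kappa$ on the smooth part), or by approximating $K$ with smooth convex bodies $K_\varepsilon=K\oplus\varepsilon B$, proving the estimate for each $K_\varepsilon$ and passing to the limit using Hausdorff continuity of the perimeter and of the curvature at the chosen boundary point.
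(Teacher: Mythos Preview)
Your proposal is correct and follows essentially the same route as the paper: arc-length parametrization of $\partial K$, identification of the outward-normal angle $\theta(s)$ (the paper's $g(t)$) with the Gauss map, the relation $\kappa=\theta'$, and the pushforward estimate $|\theta(\{s:\kappa(s)\le k\})|\le k\,l$, with the non-smooth case handled by approximation. Your treatment is slightly more detailed---you spell out the countability argument for part~(1) and separate the vertex/flat/smooth regimes explicitly, and you correctly flag the singular-continuous issue that the paper's mollification sketch also has to contend with---but the underlying argument is the same.
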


\begin{proof}
The first part of the lemma is well known. The mapping 
\begin{equation*}
H:\,(a,b)\mapsto(x_0,y_0):=\arg\max_{(x,y)\in K} ax+by
\end{equation*}
is thus well defined for all but a countable number of points $(a,b)$ on the unit circle. If the interior of $K$ is empty, then $K$ lives on a line segment. The maximiser $(x_0,y_0)$ is then one of the two endpoints of this segment for almost all $(a,b)$, and since the curvature is infinite at these points, the claim of the lemma is trivially true. 

If $K$ has nonempty interior, then its boundary $\partial K$ is locally the graph of a convex function, and hence it is continuous. Spherical projection with respect to an interior point defines a parametrization $u(\theta)$ of $\partial K$,  with $\theta$ running on the unit circle. Since the boundary $\partial K$ is locally the graph of a convex function, $u(\theta)$ is differentiable everywhere except at a countable number of points, and it is twice differentiable everywhere except on a set of Lebesgue measure 0, see e.g. \cite[Theorem 1, page 242]{EvansGar}. The length $l=\int_{0}^{2\pi}\|\diff u(\theta)/\diff\theta\|_2\diff \theta$ is thus well defined and finite, and so is the reparametrization $c(t)$ of $u(\theta)$ with respect to the length $t=\int_0^{\theta}\|\diff u(\tau)/\diff\tau\|_2\diff \tau$. Furthermore, we have $\|\dot{c}(t)\|_2=1$ for all $t\in[0,l]$, and $\ddot{c}(t)$ is defined except on a Lebesgue-null set. Let $A$ be the subset of $t\in[0,l]$ where $\dot{c}(t)$ is defined, and $B$ the subset where $\ddot{c}(t)$ is defined. Without loss of generality, we may assume that the orientation of the curve $c(t)$ is positive, so that $G(t)=i \dot{c}(t)$ is the unit normal vector to $K$ at $c(t)$ (orthogonal to $\dot{c}(t)$ and pointing away from $K$). This defines a mapping $t\mapsto G(t)$ from $A$ to the unit circle. We make the following two observations:
\begin{enumerate}
\item[(a)] $\kappa(t):=\kappa(\partial K, c(t))=\|\ddot{c}(t)\|_2=\|\dot{G}(t)\|_2$ equals the curvature of $\partial K$ at $c(t)$.
\item[(b)] Given $(a,b)$ on the unit circle, if $c(t)=\arg\max_{(x,y)\in K}ax+by$ for some $t\in A$, and if this 
is the unique maximizer, then $(a,b)=G(t)$. 
\end{enumerate}
Let $T:=\{t\in [0,l]:\,\kappa(t)\leq k\}$. The fact that $G(t)$ is defined at all points where $\kappa(t)$ is  defined combined with Observations (a) and (b) imply that 
\begin{align}
\prob\left[\kappa(t)\leq k\right]&=\prob[G(T)]\nonumber\\
&=\int_{T}|\dot{G}(t)|\frac{dt}{2\pi}\label{tag *}\\
&=\int_{T}k(t)\frac{dt}{2\pi},\nonumber\\
&\leq\frac{k\cdot l}{2\pi}\label{voila}.
\end{align}
Equation \eqref{voila} establishes the claim \eqref{e:p1} of the lemma. The only nontrivial step that needs further explanation is \eqref{tag *}. Let $g:[0,l]\rightarrow[0,2\pi]$ be such that $G(t)=\exp(i\,g(t))$. Then $g(t)$ is well defined except at a countable number of points. By convexity of $K$, $g(t)$ is a non-decreasing function, and without loss of generality we may assume that it is right continuous. Equation \eqref{tag *} can thus be reformulated as follows, 
\begin{equation}\label{tag **}
\mu[g(T)]=\int_{T}\frac{\dot{g}(t)}{2\pi}\diff t, 
\end{equation}
where $\mu$ is the uniform probability measure on the interval $[0,2\pi]$. If $g$ is smooth and increasing, then \eqref{tag **} is simply a change of variable formula. In the general case we can approximate using smooth functions.  Thus take a standard mollifier $\phi_{\epsilon}$ and $g_{\epsilon,\delta}=(g+\delta h)\star \phi_{\epsilon}$ where $h(x)=x$.  The rationale for taking $g+\delta h$ is to render the derivative positive and $g_{\epsilon}$ increasing.  Equation \eqref{tag **} is true for $g_{\epsilon,\delta}$, and its general validity is obtained by first passing $\epsilon$ to zero, followed by $\delta$.   
\end{proof}
 
\newpage

\section{Proofs of The Main Theorems}\label{s:thms}

\subsection{Proof of Theorem \ref{t:1}}

\begin{proof}
Let $R:\mathcal{A}^*\times\mathcal{A}^*\rightarrow\RR$ be a symmetric scoring function, and consider the event 
\begin{equation*}
A^n(R)=\left\{\left|\lambda_R-\frac{L_{n,R}}{n}\right|\leq \frac{5|R|\sqrt{\ln (\e n)}}{\sqrt{n}} \right\}.
 \end{equation*}
It follows from Lemma \ref{l:3} and Theorem~\ref{t:AH} that 
\begin{align*}
\prob\left[A^n(R)\right]&\geq 
\prob\left[\left|\dfrac{L_{n,R}}{n}-\frac{E[L_{n,R}]}{n}\right|\leq 2|R|\sqrt{\ln(\e n)/n},\;
\left|\frac{E[L_{n,R}]}{n}-\lambda_R\right|\leq 3|R|\sqrt{\ln(\e n)/n}\right]\\
&\geq 1-n^{-\ln n},\quad\forall\,n\gg 1. 
\end{align*}
By the assumptions of the theorem, $x_0=\lambda_S$ and 
\begin{equation}\label{curvature condition}
\kappa(\partial\text{SET}_{S,T},(x_0,y_0))\geq k>0. 
\end{equation}
For small $\epsilon>0$ the point $P_{\epsilon}:=(x_{\epsilon},y_{\epsilon})$ on the boundary $\partial\text{SET}_{S,T}$ with $y$-coordinate $y_{\epsilon}:=y_0+\epsilon/k$ nearest to $(x_0,y_0)$ is well defined. Choose $a_{\epsilon}$ such that the linear form $f_{(1,a_{\epsilon})}:(x,y)\mapsto x+a_{\epsilon}y$ has its maximizer over the set $\text{SET}_{S,T}$ at $P_{\epsilon}$. This implies that for any $(x,y)\in \text{SET}_{S,T}$, 
\begin{equation}\label{e:9}
x+a_{\epsilon}y \leq x_{\epsilon}+a_{\epsilon}y_{\epsilon}.
\end{equation}
The curvature condition \eqref{curvature condition} implies that for all $\epsilon$ small enough, 
\begin{equation*}
x_{\epsilon}\leq x_0-\frac{\epsilon^{2}}{3}.
\end{equation*}
Combined with \eqref{e:9} for $(x,y)=(x_{0},y_{0})$, this yields  $(x-x_{\epsilon})+a_{\epsilon}(y-y_{\epsilon})\leq 0$, and since furthermore $x_{0}>x_{\epsilon}$, it follows that 
\begin{equation}\label{e:8}
a_{\epsilon}\geq\dfrac{x_{0}-x_{\epsilon}}{y_{\epsilon}-y_{0}}\geq\dfrac{\epsilon k}{3}.
\end{equation}

If $A^n(S)$ holds, then for any optimal alignment $\pi$ relative to $S$ we have 
\begin{equation}
\label{I}
\left|\dfrac{S_\pi(X_{[1,n]},Y_{[1,n]})}{n} - x_0\right| \leq \dfrac{5|S|\sqrt{\ln(\e n)}}{\sqrt{n}},
\end{equation}
and similarly, if the event $A^n(S+a_{\epsilon} T)$ holds, then
\begin{equation}\label{pretag}
\left|\dfrac{L_{n,S+a_{\epsilon}T}}{n}- \lambda_{S+a_{\epsilon}T}\right|\leq \dfrac{5|S|\sqrt{\ln(\e n)}}{\sqrt{n}}.
\end{equation}
On the other hand, 
\begin{equation*}
\lambda_{S+a_{\epsilon}T}=\max_{(x,y)\in\text{SET}_{S,T}}f_{(1,a_{\epsilon})}(x,y)=x_{\epsilon}+a_{\epsilon}y_{\epsilon},
\end{equation*}
and substituted into \eqref{pretag} this yields
\begin{equation}\label{tag{*}}
\left|\dfrac{L_{n, S+a_{\epsilon}T}}{n}-(x_{\epsilon}+a_{\epsilon}y_{\epsilon})\right|\leq \dfrac{5|S+a_{\epsilon}T|\sqrt{\ln(\e n)}}{\sqrt{n}}.  
\end{equation}

Next, for any optimal alignment $\pi$ relative to $S$, we have  
\begin{equation*}
\dfrac{(S+a_{\epsilon}T)_{\pi}
(X_{[1,n]},Y_{[1,n]})}{n}\leq\dfrac{L_{n, S+a_{\epsilon}T}}{n}
\stackrel{\text{\eqref{tag{*}}}}{\leq}
x_{\epsilon}+a_{\epsilon}y_{\epsilon}+\dfrac{5|S+a_{\epsilon}T| \sqrt{\ln(\e n)}}{\sqrt{n}}. 
\end{equation*}
It now follows from \eqref{I} that 
\begin{equation*}
a_{\epsilon}\left(\dfrac{T_{\pi}(X_{[1,n]},Y_{[1,n]})}{n}-y_{0}\right)
 \leq x_{\epsilon}-x_{0}+a_{\epsilon}(y_{\epsilon}-y_{0})+\frac{5(|S+a_{\epsilon}T|+|S|) \sqrt{\ln(\e n)}}{\sqrt{n}},
\end{equation*}
and since $x_{\epsilon}-x_{0}\leq 0<a_{\epsilon}$, this finally yields that for large $n$ and small  $\epsilon>0$, 
\begin{equation*}
\dfrac{T_{\pi}(X_{[1,n]},Y_{[1,n]})}{n}-y_{0}\leq\dfrac{\epsilon}{k}+\frac{5(|S+a_{\epsilon}T|+|S|) \sqrt{\ln(\e n)}}{a_{\epsilon}\sqrt{n}}\leq\dfrac{5(2|S|+a_{\epsilon}|T|)\sqrt{\ln(\e n)}}{a_{\epsilon}\sqrt{n}}.
\end{equation*}
In combination with with \eqref{e:8} this yields  
\begin{equation*}
\dfrac{T_{\pi}(X_{[1,n]},Y_{[1,n]})}{n}-y_{0}\leq\dfrac{\epsilon}{k}+\dfrac{5(6|S|+\epsilon k|T|) }
{\epsilon k}\sqrt{\frac{\ln(\e n)}{n}}.
\end{equation*}
For large $n$, we can minimize the right-hand side over $\epsilon$, yielding 
\begin{equation*}
\frac{T_{\pi}(X_{[1,n]},Y_{[1,n]})}{n}-y_{0}\le \frac{5|T|+2\sqrt{30|S|}}{k}\left(\frac{\ln(\e n)}{n} \right)^{1/4}
\end{equation*}
By changing the scoring function $T$ to $-T$, an analogous argument also shows that 
\begin{equation*}
-\left(\dfrac{T_{\pi}(X_{[1,n]},Y_{[1,n]})}{n}-y_{0}\right)\leq
\dfrac{5|T|+2\sqrt{30|S|}}{k}\left(\dfrac{\ln(\e n)}{n}\right)^{1/4},
\end{equation*}
and hence, 
\begin{equation}\label{the above inequality}
\left|\dfrac{T_{\pi}(X_{[1,n]},Y_{[1,n]})}{n}-y_{0}\right|\leq
\dfrac{5|T|+2\sqrt{30|S|}}{k}\left(\dfrac{\ln(\e n)}{n}\right)^{1/4}.
\end{equation}
We conclude that if all of the events $A^{n}(S)$ and $A^{n}(S+a_{\epsilon}T)$ and $A^{n}(S-a_{\epsilon}T)$ hold, then \eqref{the above inequality} applies, and since the probability that any individual event fails to hold is bounded by $n^{-\ln n}$, the claim of the theorem follows. 
\end{proof}

\subsection{Proof of Theorem \ref{t:2}}

\begin{proof}
Let $V=\arccos\langle S,T\rangle_F$, where $\langle\cdot,\cdot\rangle_F$ is the inner product on the space of symmetric scoring functions that corresponds to the Frobenius norm. Then $V$ is uniformly distributed on $[-\pi/2,\pi/2]$. Let $T_1$ be the Gram-Schmidt orthogonalization of $T$ with respect to $S_1:=S$, and let $U$ be a uniform random variable on $[0,2\pi]$, independent of $S$ and $T$, and hence also of $V$, and let us define $(S_2,T_2)=\Phi(S_1,T_1)$, where $\Phi$ is the rotation  
\begin{align*}
\Phi:\,\RR^2&\rightarrow\RR^2,\\
(x,y)&\mapsto(\cos(U)x+\sin(U)y,\, -\sin(U)x+\cos(U)y)
\end{align*}
by the angle $U$. It is easy to see that $\text{SET}_{S_2,T_2}=\Phi(\text{SET}_{S_1,T_1})$, and that 
under $\Phi^{-1}$, the point where $\text{SET}_{S_2,T_2}$ has a point of maximal first coordinate corresponds to the point where the random linear form $f:(x,y)\mapsto\cos(U)x+\sin(U)y$ takes 
a maximum value on $\text{SET}_{S_1,T_1}$. Furthermore, since $\Phi$ is angle-preserving, the 
curvature $\kappa_1$ of $\partial \text{SET}_{S_2,T_2}$ and $\partial \text{SET}_{S_1,T_1}$ at these points is  also the same. Lemma \ref{randomfunctional} applies, and we have 
$\prob[\kappa_1\leq k]\leq k\cdot l/(2\pi)$, where $l$ is the length of the boundary of $\text{SET}_{S_1,T_1}$. Since the scoring functions under considerations have unit norm, the rescaled alignment score cannot exceed $2$, implying that $l\leq 8$ and 
\begin{equation}\label{kappa2kappa}
\prob[\kappa_1\leq k]\leq\dfrac{4k}{\pi}.
\end{equation}

It remains to relate $\kappa_1$ to the curvature $\kappa$ of $\partial\text{SET}_{S,T}$ at the point where its first coordinate is maximized. Since $\text{SET}_{S,T}=\Psi(\text{SET}_{S_1,T_1})$, where $\Psi$ is the linear transformation 
\begin{align*}
\Psi:\,\RR^2&\rightarrow\RR^2,\\
(x,y)&\mapsto(x,\, \cos(v)x+\sin(V)y), 
\end{align*}
we have $\kappa=\kappa_1/|\sin V|\geq\kappa_1$, so that 
\begin{equation*}
\prob[\kappa<k]\leq\prob[\kappa_2<k]\leq\dfrac{4k}{\pi}, 
\end{equation*}
as claimed in the statement of the theorem. 
\end{proof}


\begin{thebibliography}{1}

\bibitem{azuma}
K.~Azuma.
\newblock Weighted sums of certain dependent random variables.
\newblock {\em Tohuku Math.\ J.}, 19:357--367, 1967.

\bibitem{Sankoff1}
V{\'a}cl{\'a}v Chvatal and David Sankoff.
\newblock Longest common subsequences of two random sequences.
\newblock {\em J.\ Appl.\ Probability}, 12:306--315, 1975.

\bibitem{EvansGar}
Lawrence~C. Evans and Ronald~F. Gariepy.
\newblock {\em Measure theory and fine properties of functions}.
\newblock Studies in Advanced Mathematics. CRC Press, Boca Raton, FL, 1992.

\bibitem{geometry}
Raphael Hauser and Heinrich Matzinger.
\newblock Distribution of aligned letter pairs in optimal alignments of random
  sequences.
\newblock {\em arXiv:1211.5491}, 2013.

\bibitem{needleman-wunsch}
S.B.\ Needleman and C.D. Wunsch.
\newblock A general method applicable to the search for similarities in the
  amino acid sequence of two proteins.
\newblock {\em Journal of Molecular Biology}, 48:443–453, 1970.

\bibitem{Vingron}
M.S.\ Waterman and M.~Vingron.
\newblock Sequence comparison significance and poisson approximation.
\newblock {\em Statistical Science}, 9(3):367--381, 1994.

\end{thebibliography}
\end{document}